\DeclareMathAlphabet{\mathpzc}{OT1}{pzc}{m}{it}
\newtheorem{thm}{Theorem}[section]
\newtheorem{lem}[thm]{Lemma}
\newtheorem{prop}[thm]{Proposition} 
\newtheorem{cor}[thm]{Corollary}
\newtheorem{rem}[thm]{Remark}
\newtheorem{ex}[thm]{Example}
\newcommand{\bR}{\mathbb R}
\newcommand{\bC}{\mathbb C}
\newcommand{\bN}{\mathbb N}
\newcommand{\A}{\mathbb A}
\newcommand{\td}{\operatorname{tr.deg}}
 \title{An algebraic characterization of the affine three space}
 \author{Nikhilesh Dasgupta and Neena Gupta\\
 {\small{\it Statistics and Mathematics  Unit, Indian Statistical Institute,}}\\
 {\small{\it 203 B.T. Road, Kolkata 700 108, India}}\\
 {\small{\it e-mail:  its.nikhilesh@gmail.com}}\\
 {\small{\it e-mail : neenag@isical.ac.in, rnanina@gmail.com}}}
\begin{document}
\date{}
\maketitle

\abstract{In this paper we give algebraic characterizations of the affine $2$-space and the affine $3$-space over an 
algebraically closed field of characteristic zero, using a variant of the Makar-Limanov invariant.

\smallskip

 \noindent
 {\small {{\bf Keywords}. Polynomial Rings, Locally Nilpotent Derivation, Makar-Limanov Invariant.}
 
 \noindent
 {\small {{\bf 2010 MSC}. Primary: 14R10; Secondary: 13F20, 13A50, 13N15, 14R20.}}
 }

\section{Introduction}

Let $R$ be a ring and $n(\geqslant 1)$ be an integer. For an $R$-algebra $A$, we use the notation $A=R^{[n]}$ to 
denote that $A$ is isomorphic to a polynomial ring in $n$ variables over $R$. 

One of the important problems in affine algebraic geometry is to find a useful characterization of an affine $n$-space. 
This ``Characterization Problem'' is closely related to other challenging problems on the affine space like the 
``Cancellation Problem''. For instance, if $k$ is an algebraically closed field
of characteristic zero, $k^{[1]}$ is the only one-dimensional UFD with trivial units. This is an algebraic 
characterization. It immediately solves the Cancellation Problem: $A^{[1]} =k^{[2]} \implies A= k^{[1]}$. 
If $k=\bC$,  then the affine line $\A^{1}_{\bC}$ is the only acyclic normal curve, a topological characterization.

In his attempt to solve the Cancellation Problem, C.P. Ramanujam obtained a remarkable topological
characterization of the affine plane $\bC^2$ (\cite{R}). 
Later M. Miyanishi (\cite{MN1}) obtained an algebraic characterization of the polynomial ring
$k^{[2]}$ over an algebraically closed field of characteristic zero, 
which was used by  T. Fujita, M. Miyanishi and T. Sugie (\cite{Fu}, \cite{MS})
to solve the Cancellation Problem for the affine plane. 
Since then, there have been several attempts to give a characterization of 
$k^{[3]}$. 
Remarkable results were obtained by Miyanishi \cite{MN2} and 
 Kaliman \cite{K}. These results involved some topological invariants.  
In this paper, we will use a variant of the Makar-Limanov invariant (defined below), to give  
new algebraic characterizations of $k^{[2]}$ and $k^{[3]}$.

 We first recall the Makar-Limanov invariant and its variant. 
Let $k$ be a field of characteristic zero and $B$ an affine $k$-domain.
 The set of locally nilpotent 
$k$-derivations of $B$ is denoted by $LND(B)$. 
We denote the kernel of a locally nilpotent derivation $D$ by $Ker$ $D$. 
The {\it Makar-Limanov invariant} of $B$, denoted 
by $ML(B)$, is defined to be 
$$
ML(B):=\bigcap_{D \in LND(B)} Ker~D.
$$ 
The Makar-Limanov invariant has been a powerful 
tool for solving some major problems in affine algebraic geometry
like the Linearization Problem \cite[pp. 195--204]{F}. 
When $k$ is an algebraically closed field of characteristic zero, the Makar-Limanov invariant also gives a characterization  of 
$k^{[1]}$, i.e., for an affine 
$k$-domain $B$ with $\td_{k}B=1$, $B=k^{[1]}$ if and only if $ML(B)=k$ (cf. Theorem \ref{cml2}). 
However, the triviality of the Makar-Limanov invariant alone does not characterize
the affine $2$-space (i.e., $\dim B=2$ and $ML(B)=k$ $\nRightarrow$ $B= k^{[2]}$; cf. Theorem \ref{dan}). 

In this paper, we show that under the additional condition that $B$ has a locally nilpotent derivation
$D$ ``with slice'' (i.e., $1 \in {\rm Im}(D)$), the condition $ML(B)=k$ does imply that $B= k^{[2]}$ (if $\dim B= 2$)
and $B= k^{[3]}$ (if $\dim B= 3$ and $B$ is a UFD). 
In fact, we show that Theorem \ref{cml2} can indeed be extended to dimensions $2$ and $3$, if we replace the condition
``$ML(B)=k$'' by ``$ML^*(B)=k$'', where $ML^*(B)$ is an invariant which we define below.


Consider the subset $LND^{*}(B) \subseteq LND(B)$ defined by 
$$
LND^{*}(B)=\{D \in LND(B)~|~Ds=1~ \text{for some} ~ s \in B\}.
$$ 
Then we define 
$$
ML^{*}(B):= \bigcap_{D \in LND^{*}(B)} Ker~D.
$$ 
This invariant is introduced by G. Freudenburg  in \cite[p. 237]{F}.
We shall call it the Makar-Limanov--Freudenburg invariant or ML-F invariant.
If $LND^{*}(B)=\emptyset$, we define 
$ML^{*}(B)$ to be $B$. 
Note that if $ML^{*}(B)= k$ then automatically $ML(B)=k$. 
Also note that $ML^{*}(k^{[n]})=ML(k^{[n]})=k$ for each $n \geq 1$. 

%
Our main results are: 
\bigskip

\noindent
{\it {\bf Theorem 3.8.}
Let $k$ be a field of characteristic zero and $B$ a two-dimensional affine $k$-domain. Then the following are equivalent:
\begin{enumerate}
 \item [\rm (I)]$B=k^{[2]}$.
 \item [\rm (II)]$ML^{*}(B)=k$.
 \item [\rm (III)]$ML(B)=k$ and $ML^{*}(B) \neq B$.
\end{enumerate}
}
\bigskip 

\noindent
{\it {\bf Theorem 4.6.}
Let $k$ be an algebraically closed field of characteristic zero and $B$ an affine $k$-domain such that $B$ is a UFD and 
dim $B=3$. Then the following are equivalent:
\begin{enumerate}
 \item [\rm (I)]$B=k^{[3]}$.
 \item [\rm (II)]$ML^{*}(B)=k$.
 \item [\rm (III)]$ML(B)=k$ and $ML^{*}(B) \neq B$.
\end{enumerate}
}

In Section $2$, we recall a few preliminary definitions and results; in Sections $3$ and $4$, we prove our main theorems on 
characterizations of $k^{[2]}$ and $k^{[3]}$ respectively. In section 4, we also give a classification of three-dimensional affine factorial domains
for which Makar-Limanov invariant and Makar-Limanov--Freudenburg invariant are same and in Section $5$ we present a few examples.


\section{Preliminaries}
{\bf Notation:}

By a ring, we will mean a commutative ring with unity. We denote the group of units of a ring $R$ by $R^{*}$. 
For a ring $A$ and a non-zerodivisor 
$f \in A$, we use the notation $A_f$ to denote the localisation of $A$ with respect to the multiplicatively closed 
set $\{1,f,f^2,\dots\}$. We denote the Krull dimension of a ring $B$ by dim $B$. For integral domains $A \subseteq B$, the 
transcendence degree of the field of fractions of $B$ over that of $A$ is denoted by $\td_{A}{B}$. 
Capital letters like $X,Y,Z,T,U,V$ will be used as indeterminates over respective 
ground rings; thus, $k[X,Y,Z]=k^{[3]}$, $R[U,V]=R^{[2]}$, etc. The notation $ML(B)$ and $ML^{*}(B)$ have been defined 
in Section 1.

\noindent
{\bf Definitions:}

A subring $A$ of an integral domain $B$ is defined to be {\it inert} in $B$ if, given non-zero 
$f,g \in B$, the condition $fg \in A$ implies $f \in A$ and $g \in A$.
One can see that an inert subring of a UFD is a UFD and intersection 
of inert subrings is again inert. 
If $A$ is an inert subring of $B$, then $A$ is algebraically closed in $B$; 
further if $S$ is a multiplicatively closed set in $A$ then $S^{-1}A$ is an inert subring of $S^{-1}B$.

A non-zero locally nilpotent derivation $D$ on $B$ is said to be {\it reducible} if there exists a non-unit $b \in B$ such that 
$DB \subseteq bB$; otherwise $D$ is said to be {\it irreducible}. 

We say two locally nilpotent derivations $D_1$ and $D_2$ 
$\in LND(B)$ are {\it distinct} if $Ker$ $D_1 \neq Ker$ $D_2$. 

An affine $k$-domain $B$ is defined to be 
{\it rigid} if it does not have any non-zero locally nilpotent derivation. Thus for a rigid ring $B$, $ML(B)=ML^{*}(B)=B$. 
$B$ is defined to be {\it semi-rigid} if there exists a non-zero locally nilpotent derivation 
$D$ on $B$ such that $LND(B)=\{ fD ~ | ~ f \in Ker ~D\}$. 
Thus for an affine $k$-domain $B$, with $LND(B)\neq \{0\}$, $B$ is semi-rigid
if and only if $ML(B)= Ker~D$ for all non-zero $D  \in  LND(B)$.

An element $s \in B$ is called a {\it slice} if $D(s)=1$, and a 
{\it local slice} if $D(s) \in A$ and $D(s) \neq 0$.

\medskip

We shall use the following necessary and sufficient criterion, due to Nagata, for an integral domain to be a UFD 
\cite[Theorem 20.2]{M}.
\begin{lem}\label{nag}
Let $R$ be a Noetherian domain. If there exists a prime element $x$ in $R$ such that $R_{x}$ is a UFD, then $R$ is a UFD.
\end{lem}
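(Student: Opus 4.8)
The plan is to reduce the assertion to the statement that every irreducible element of $R$ is prime. Since $R$ is Noetherian, the ascending chain condition on principal ideals shows that $R$ is atomic — every nonzero nonunit is a finite product of irreducibles — and once irreducibles are known to be prime, uniqueness of such factorizations is automatic, so $R$ is a UFD. Accordingly, fix an irreducible $p \in R$. If $p$ is an associate of $x$ it is prime and we are done, so assume $p \not\sim x$. Since $x$ is prime, hence irreducible, the relations $p = xq$ and $x = pq$ each force $q$ to be a unit; so in fact $x \nmid p$ and $p \nmid x$ (equivalently $x \notin pR$).

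\textbf{Step 1 (stability of primality under inverting $x$).} I would first show that $p$ remains prime in $R_x$. Every nonzero element of $R_x$ can be written $a/x^m$ with $a \in R$ and, after cancelling the highest power of $x$ dividing $a$ (this maximal power exists because $R$ is Noetherian), with $x \nmid a$. Given a factorization $p = (a/x^i)(b/x^j)$ in $R_x$ with $x \nmid a$ and $x \nmid b$, clearing denominators gives $x^{i+j}p = ab$ in $R$; as $x$ is prime and divides none of $a$, $b$, $p$, we must have $i = j = 0$, so $p = ab$ in $R$, and irreducibility of $p$ makes one of $a, b$ a unit of $R$, hence of $R_x$. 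An identical denominator-clearing argument applied to a putative relation $1 = p(d/x^m)$ shows $p$ is not a unit of $R_x$. Hence $p$ is irreducible, therefore prime, in the UFD $R_x$, and $R_x/pR_x$ is an integral domain.

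\textbf{Step 2 (descent).} Next I would transport primality back to $R$. In $R/pR$ the residue $\bar x$ is nonzero because $x \notin pR$, and it is a non-zerodivisor: a relation $xr = ps$ in $R$ gives $x \mid ps$, and since $x$ is prime with $x \nmid p$ we get $x \mid s$, say $s = xs'$, whence $r = ps' \in pR$. Therefore the localization map $R/pR \to (R/pR)_{\bar x} \cong R_x/pR_x$ is injective, so $R/pR$ embeds into an integral domain and is itself a domain. Thus $pR$ is prime, $p$ is a prime element, and combined with atomicity this shows $R$ is a UFD.

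The step I expect to be the main obstacle is Step 1: one must be certain that inverting $x$ neither turns $p$ into a unit nor splits it into a product of genuine non-units of $R_x$, and both points are controlled entirely by the primality of $x$ in $R$, which pins down the exact power of $x$ occurring in each denominator. After that, Step 2 merely reuses the primality of $x$ once more, this time to see that $\bar x$ is a non-zerodivisor modulo $p$, which is what makes the comparison between $R/pR$ and $R_x/pR_x$ faithful.
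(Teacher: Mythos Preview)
Your proof is correct. The paper itself does not prove this lemma; it merely cites Matsumura \cite[Theorem 20.2]{M} (Nagata's criterion) as a known result, so there is no in-paper argument to compare against.

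For context, the standard textbook proof (as in Matsumura) typically proceeds via the characterization that a Noetherian domain is a UFD if and only if every height-one prime ideal is principal: given a height-one prime $P$, either $x \in P$, in which case $P = (x)$ by comparing heights, or $x \notin P$, in which case $P R_x$ is a height-one prime in the UFD $R_x$, hence principal, and one descends a generator back to $R$. Your argument bypasses this height-one characterization entirely and works directly at the level of elements, showing that each irreducible $p$ stays irreducible (hence prime) in $R_x$ and then pulling primality back via the injectivity of $R/pR \hookrightarrow (R/pR)_{\bar x} \cong R_x/pR_x$. This is a perfectly valid and somewhat more elementary route; it trades the clean structural statement about height-one primes for a hands-on denominator-clearing argument, and it has the virtue of making transparent exactly where the primality of $x$ is used (twice: once to control denominators, once to see $\bar x$ is a non-zerodivisor modulo $p$).
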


We now quote a well-known result (for a reference, see the proof of \cite[Lemma 2.8]{F}). 
\begin{lem}\label{pic}
Let $k$ be an algebraically closed field of characteristic zero and $C$ be an affine UFD over $k$ of dimension one. Then  
$C=k[t,\frac{1}{p(t)}]$, where $k[t]=k^{[1]}$ and $p(t) \in k[t] \setminus \{ 0\}$. As a consequence,
if $C^{*}=k^{*}$, then $C=k^{[1]}$.
\end{lem}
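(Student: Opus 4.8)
The statement classifies one-dimensional affine UFDs, so the plan is to pass to the associated smooth projective curve and exploit the triviality of the divisor class group. First, since $C$ is a one-dimensional affine $k$-domain that is a UFD, it is Noetherian, normal, and has trivial class group; being normal of dimension one it is in fact a PID, and $X := \Spec C$ is a smooth affine curve over $k$. Let $K = \operatorname{Frac}(C)$ (so $\td_k K = 1$) and let $\bar X$ be the unique smooth projective curve over $k$ with function field $K$. Then $X$ is a dense open subset of $\bar X$, the complement $S := \bar X \smallsetminus X$ is finite, and $S \neq \emptyset$ because a positive-dimensional affine variety is not projective; put $n := \#S \geq 1$. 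Since $X$ is affine and normal, $C = \bigcap_{P \in X}\mathcal{O}_{\bar X,P}$, i.e.\ $C$ is the ring of rational functions on $\bar X$ that are regular away from $S$.

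Next I would show $\bar X \cong \mathbb{P}^1_k$. The excision exact sequence for the curve $\bar X$,
$$
\bigoplus_{P \in S} \bZ\cdot[P] \longrightarrow \operatorname{Cl}(\bar X) \longrightarrow \operatorname{Cl}(X) \longrightarrow 0 ,
$$
together with $\operatorname{Cl}(X) = 0$ (as $C$ is a UFD), shows that $\operatorname{Pic}(\bar X) = \operatorname{Cl}(\bar X)$ is generated by the finitely many classes $[P]$, $P \in S$. Hence its subgroup $\operatorname{Pic}^0(\bar X)$, which is the group of $k$-points of the Jacobian of $\bar X$ --- an abelian variety over $k$ of dimension $g := \operatorname{genus}(\bar X)$ --- is finitely generated. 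But over an algebraically closed field the group of points of a positive-dimensional abelian variety is divisible (multiplication by any nonzero integer is surjective on points) and nontrivial, hence not finitely generated; therefore $g = 0$, and a smooth projective genus-zero curve over an algebraically closed field is $\mathbb{P}^1_k$.

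Finally I would identify $C$. Fix a coordinate identifying $\bar X$ with $\mathbb{P}^1_k = \Spec k[t] \cup \{\infty\}$; after a suitable automorphism of $\mathbb{P}^1_k$ we may assume $\infty \in S$. Since $k$ is algebraically closed, the remaining points of $S$ correspond to values $t = a_1,\dots,a_{n-1} \in k$, so $X = \A^1_k \smallsetminus \{a_1,\dots,a_{n-1}\}$ and hence $C = \mathcal{O}(X) = k\bigl[t,\tfrac{1}{p(t)}\bigr]$ with $p(t) := \prod_{i=1}^{n-1}(t - a_i) \in k[t]\smallsetminus\{0\}$ (the empty product being $1$ when $n = 1$, which gives $C = k^{[1]}$). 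For the last assertion, since $k = \bar k$ the irreducible factors of $p$ are the linear polynomials $t - a_i$, so $C^* = k^*\cdot\langle\, t - a_1,\dots,t - a_{n-1}\,\rangle$; thus $C^* = k^*$ forces $n = 1$, i.e.\ $p$ constant and $C = k[t] = k^{[1]}$.

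The formal parts above --- normalization, the excision sequence, and the descriptions of $\mathbb{P}^1_k$ and of $k[t,1/p(t)]^*$ --- are standard; the one substantive step is the genus computation, where one must pair the excision sequence with the fact that the $k$-points of a nonzero abelian variety over an algebraically closed field form a divisible, non-finitely-generated group. A more elementary but fussier alternative is to apply Riemann--Roch on $\bar X$ to produce a rational function defining a degree-one morphism $\bar X \to \mathbb{P}^1$; the class-group argument is cleaner and would be my choice.
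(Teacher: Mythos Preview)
The paper does not give its own proof of this lemma; it is stated as a well-known result with a pointer to the proof of \cite[Lemma 2.8]{F}. Your argument is correct and is essentially the standard one: pass to the smooth projective model $\bar X$, use the excision sequence to see that $\operatorname{Cl}(\bar X)$ is finitely generated because $\operatorname{Cl}(X)=0$, and then use divisibility of the $k$-points of the Jacobian over an algebraically closed field to force $g=0$, whence $\bar X\cong\mathbb{P}^1_k$ and $C$ is a localization of $k[t]$. The identification of $C^*$ at the end is also correct. This is the approach one finds (in some form) in the cited reference, so there is nothing to contrast; your write-up simply supplies the details the paper omits.
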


Let $k$ be a field of characteristic zero, $B$ a $k$-domain and $D$ an element of $LND(B)$ with a local slice $r \in B$. Let $t= D(r)$ . 
The Dixmier map induced by $r$ is defined to be  
the $k$-algebra homomorphism 
$\pi_r:B \rightarrow B_t$, given by 
\begin{equation*}
\pi_r(f)=\sum_{i\geqslant 0}\frac{(-1)^i}{i!}D^{i}f\frac{r^i}{t^i}.
\end{equation*}

The following result is known as the Slice Theorem \cite[Corollary 1.22]{F}.
\begin{thm}\label{st}
 Let $k$ be a field of characteristic zero and $B$ a $k$-domain with $LND^{*}(B) \neq \emptyset$. 
 Let $D \in LND(B)$ admit a slice $s\in B$ and let $A=Ker$ $D$. Then 
 \begin{enumerate}
  \item[\rm (a)]$B=A[s]$ and $D=\frac{\partial}{\partial s}$.
  \item[\rm (b)]$A=\pi_{s}(B)$ and $Ker$ $\pi_{s}=sB$.
  \item[\rm (c)]If $B$ is affine, then $A$ is affine.
 \end{enumerate}
\end{thm}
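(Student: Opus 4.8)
The plan is to establish (a) from the local nilpotence of $D$ together with the slice relation $Ds=1$, then to read off (b) by identifying the Dixmier map $\pi_{s}$ with the evaluation homomorphism $A[s]\to A$ sending $s\mapsto 0$, and finally to deduce (c) by a descent of finitely many generators of $B$ into a finitely generated subalgebra of $A$.

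For (a), I would first show that $s$ is transcendental over $A$. Given a relation $\sum_{i=0}^{n}a_{i}s^{i}=0$ with $a_{i}\in A$ and $a_{n}\neq 0$, the identity $D(a_{i}s^{i})=i\,a_{i}s^{i-1}$ (which holds because $a_{i}\in\operatorname{Ker} D$ and $Ds=1$) shows that applying $D^{n}$ collapses the left-hand side to $n!\,a_{n}$; since $B$ is a domain of characteristic zero this forces $a_{n}=0$, a contradiction. Next, $B=A[s]$ follows by induction on $m=\deg_{D}(b):=\min\{\,i:D^{i+1}b=0\,\}$ for $b\neq 0$: if $m=0$ then $b\in A$, while if $m\geq 1$ then $c:=D^{m}b$ lies in $A$ and, using $D^{m}(s^{m})=m!$, the element $b-\tfrac{1}{m!}c\,s^{m}$ is annihilated by $D^{m}$, hence lies in $A[s]$ by the inductive hypothesis, so $b\in A[s]$ too. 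Finally $D$ and $\tfrac{\partial}{\partial s}$ are derivations of $B$ agreeing on the generating set $A\cup\{s\}$, so they coincide.

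For (b), now that $B=A[s]$ is available, I would verify that $\pi_{s}(f)=\sum_{i\geq 0}\tfrac{(-1)^{i}}{i!}D^{i}(f)\,s^{i}$ (a finite sum, by local nilpotence) coincides with the $k$-algebra homomorphism $\varepsilon: A[s]\to A$, $s\mapsto 0$: indeed $\pi_{s}$ is $k$-linear and restricts to the identity on $A$, so it suffices to check $\pi_{s}(as^{i})=0$ for $a\in A$ and $i\geq 1$, and expanding $D^{j}(s^{i})=\tfrac{i!}{(i-j)!}s^{i-j}$ gives $\pi_{s}(as^{i})=as^{i}\sum_{j=0}^{i}(-1)^{j}\binom{i}{j}=as^{i}(1-1)^{i}=0$. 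Hence $\pi_{s}=\varepsilon$, so $\pi_{s}(B)=A$ and $\operatorname{Ker}\pi_{s}=\operatorname{Ker}\varepsilon=sA[s]=sB$. For (c), if $b_{1},\dots,b_{r}$ generate the affine ring $B$ over $k$, write $b_{j}=\sum_{i}a_{ji}s^{i}$ with $a_{ji}\in A$ and put $A_{0}:=k[\,a_{ji}:j,i\,]\subseteq A$, a finitely generated $k$-algebra; then $B=k[b_{1},\dots,b_{r}]\subseteq A_{0}[s]\subseteq A[s]=B$, so $A_{0}[s]=A[s]$, and since $s$ is transcendental over $A_{0}$ as well, comparing the expansions $B=\bigoplus_{i\geq 0}A_{0}s^{i}=\bigoplus_{i\geq 0}As^{i}$ forces $A\subseteq A_{0}$, that is, $A=A_{0}$ is affine.

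I do not expect a genuine obstacle here: the argument is elementary, and the only indispensable hypotheses are characteristic zero (to divide by the factorials in the formulas above) and the domain property of $B$ (in the transcendence step). If any point requires care it is the identification $\pi_{s}=\varepsilon$ in (b), or the generator-descent in (c); for the former one may alternatively quote directly that $\pi_{s}$ is a $k$-algebra homomorphism with $\pi_{s}|_{A}=\mathrm{id}_{A}$ and $\pi_{s}(s)=0$ and then argue as above.
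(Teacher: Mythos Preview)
Your proof is correct. Note, however, that the paper does not supply its own proof of this statement: it is recorded in the preliminaries as the Slice Theorem and attributed to Freudenburg's book (Corollary~1.22 of \cite{F}) without argument. Your write-up is essentially the standard proof found in that reference---the induction on $D$-degree to get $B=A[s]$ in part~(a), the identification of the Dixmier map with the evaluation $s\mapsto 0$ in part~(b), and the coefficient-descent to a finitely generated subalgebra $A_{0}\subseteq A$ in part~(c)---so there is nothing within the paper itself to compare against.
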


The following lemma states some basic properties of locally nilpotent derivations of an affine domain \cite[1.1]{DF}.
\begin{lem}\label{inert}
Let $k$ be a field of characteristic zero and $B$ be an affine $k$-domain. Let $D\in LND(B)$ and $A:=Ker$ $D$. Then 
the following hold:
\begin{enumerate}
 \item [\rm (i)]$A$ is an inert subring of $B$.
 \item [\rm (ii)]For any multiplicatively closed subset $S$ of $A \setminus \{0\}$, $D$ extends to a locally nilpotent 
 derivation of $S^{-1}B$ with kernel $S^{-1}A$ and $B \cap S^{-1}A=A$.
 \item [\rm (iii)]Moreover, if $D$ is non-zero, then $\td_{A}B=1$.
\end{enumerate}
As a consequence, $ML(B)$ and $ML^*(B)$ are inert subrings of $B$ and hence are algebraically closed in $B$.
\end{lem}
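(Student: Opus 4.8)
The plan is to establish (i), (ii), (iii) in order using the degree function attached to $D$, and then to read off the final assertion from the stability properties of inert subrings recorded above. For (i), I would introduce the $D$-degree $\deg_D\colon B\setminus\{0\}\to\bZ_{\geqslant 0}$ defined by $\deg_D(b)=\max\{n\geqslant 0 : D^n b\neq 0\}$ (finite since $D$ is locally nilpotent), with the convention $\deg_D(0)=-\infty$. The key point, and the only place the domain hypothesis and characteristic zero are really used, is the additivity $\deg_D(fg)=\deg_D(f)+\deg_D(g)$ for nonzero $f,g\in B$: writing $m=\deg_D f$ and $n=\deg_D g$, the Leibniz rule gives $D^{m+n}(fg)=\binom{m+n}{m}(D^m f)(D^n g)+(\text{terms killed by }D)$, and $(D^m f)(D^n g)\neq 0$ because $B$ is a domain while $\binom{m+n}{m}$ is a unit in $B$. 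Granting this, if $f,g$ are nonzero with $fg\in A$ then $0=\deg_D(fg)=\deg_D f+\deg_D g$ forces $\deg_D f=\deg_D g=0$, i.e.\ $f,g\in A$; hence $A$ is inert in $B$, and in particular algebraically closed in $B$.

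For (ii), given a multiplicatively closed $S\subseteq A\setminus\{0\}$, I would extend $D$ to $S^{-1}B$ by the quotient rule; since every $s\in S$ lies in $A=\operatorname{Ker}D$, this collapses to $D(b/s)=Db/s$, which is visibly well defined, $S^{-1}A$-linear, satisfies Leibniz, and is locally nilpotent because $D^n(b/s)=(D^n b)/s=0$ for $n\gg 0$. If $x=b/s$ satisfies $Dx=0$ then $Db=0$, so $b\in A$ and $x\in S^{-1}A$; thus $\operatorname{Ker}(D|_{S^{-1}B})=S^{-1}A$. Finally, for $B\cap S^{-1}A=A$ the inclusion $\supseteq$ is clear, and if nonzero $b\in B$ satisfies $bs'\in A$ for some $s'\in S$, then $b,s'\in B$ have product in $A$, so inertness from part (i) gives $b\in A$.

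For (iii), assume $D\neq 0$ and pick $c\in B$ with $\deg_D c=:\ell\geqslant 1$; set $r=D^{\ell-1}c$, so that $t:=Dr\in A\setminus\{0\}$ and $D^2 r=0$, i.e.\ $r$ is a local slice. Localizing at $S=\{1,t,t^2,\dots\}$, part (ii) shows $D$ extends to $B_t$ with kernel $A_t$, and $r/t$ is a \emph{slice} for this extension since $D(r/t)=Dr/t=1$. The Slice Theorem (Theorem \ref{st}(a)) then gives $B_t=A_t[r/t]=A_t^{[1]}$, whence $\td_{A_t}B_t=1$; since the transcendence degree of fraction fields is unchanged under localization, $\td_A B=1$.

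For the concluding sentence, each $\operatorname{Ker}D$ with $D\in LND(B)$ is inert in $B$ by (i) (and $\operatorname{Ker}0=B$ is trivially inert), so $ML(B)=\bigcap_{D\in LND(B)}\operatorname{Ker}D$ is an intersection of inert subrings, hence inert; likewise $ML^{*}(B)$ is either $B$ or an intersection of some of these kernels, hence inert. As an inert subring is algebraically closed in $B$, both invariants are algebraically closed in $B$. I expect no genuine obstacle here: the only substantive step is the multiplicativity of $\deg_D$ in (i), where one must be careful that $B$ being a domain of characteristic zero is exactly what makes $\binom{m+n}{m}$ a unit and keeps $(D^m f)(D^n g)$ nonzero; (ii) is a routine verification and (iii) is immediate once the Slice Theorem is invoked.
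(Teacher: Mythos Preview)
Your argument is correct and is the standard one; the paper itself does not supply a proof of this lemma but simply cites \cite[1.1]{DF}, so there is nothing to compare against beyond noting that your approach is exactly the classical degree-function proof one finds in that reference and in \cite{F}. One cosmetic point: in your Leibniz computation for $D^{m+n}(fg)$ the remaining terms are not ``killed by $D$'' but are already zero, since each summand $\binom{m+n}{i}(D^i f)(D^{m+n-i}g)$ with $i\neq m$ has either $i>m$ or $m+n-i>n$; the identity is simply $D^{m+n}(fg)=\binom{m+n}{m}(D^m f)(D^n g)$ on the nose.
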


The following result ensures that $LND^{*}(B) \neq \emptyset$ whenever $B$ is a two-dimensional factorial affine domain 
over an algebraically closed field $k$ of characteristic zero with $LND(B) \neq \{0\}$ \cite[Lemma 2.9]{F}.
\begin{lem}\label{Fufd}
Let $k$ be an algebraically closed field of characteristic zero and $B$ 
an affine $k$-domain such that $B$ is a UFD and dim $B=2$. Then every non-zero irreducible element of $LND(B)$ has a 
slice. 
\end{lem}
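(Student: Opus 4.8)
The plan is to take a non-zero irreducible $D \in LND(B)$, set $A = \operatorname{Ker} D$, and show that $D$ admits a slice. By Lemma \ref{inert}(i) and (iii), $A$ is an inert subring of $B$ with $\td_A B = 1$; since $B$ is a two-dimensional UFD and inert subrings of a UFD are UFDs, $A$ is a one-dimensional affine UFD over $k$ (affineness of $A$ follows either from the Slice Theorem once we produce a slice, or here from the fact that $A$ is the kernel of an LND on an affine domain — but we must be careful not to argue circularly, so I would instead use that $A$ is algebraically closed in $B$ together with dimension considerations, or invoke the standard fact that the kernel of an LND on a two-dimensional affine domain is affine). First I would establish that $D$ has a local slice $r \in B$, i.e.\ an element with $t := D(r) \in A \setminus \{0\}$: such an $r$ always exists for a non-zero LND (pick any $b$ with $D b \neq 0$ and $D^2 b = 0$). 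The real content is to upgrade the local slice to an actual slice by showing $t \in A^*$.

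The key step is to analyze the structure of $A$. Since $A$ is a one-dimensional affine UFD over the algebraically closed field $k$, Lemma \ref{pic} gives $A = k[u, 1/p(u)]$ for some $u$ with $k[u] = k^{[1]}$ and $p(u) \in k[u] \setminus \{0\}$. If I can show $A^* = k^*$, then Lemma \ref{pic} forces $A = k^{[1]}$, and moreover every nonzero element of $A$ that divides a unit is itself a unit. To control $A^*$, I would use that $B$ is a UFD with $B^* = k^*$ (a two-dimensional affine UFD over $k$ having nonconstant units would contain a Laurent-polynomial subring, but more directly: if $B^* \supsetneq k^*$ then $B$ has a unit $w \notin k$, and then $B = B_0[w, w^{-1}]$-type splittings contradict factoriality unless... — actually the cleanest route is: an affine UFD over $k$ with a nonconstant unit has dimension $\geq 1$ with that unit giving a surjection to $\mathbb{G}_m$, and one checks this is incompatible with the UFD + affine + the presence of $D$; alternatively, since $A \subseteq B$ is inert, $A^* = A \cap B^*$, so it suffices to know $B^* = k^*$). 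Since $A$ is inert in $B$, we get $A^* = B^* \cap A$; and $B^* = k^*$ because $B$ is a UFD whose spectrum is a smooth-in-codimension-one affine variety on which $\mathbb{G}_a$ acts nontrivially — here I would just cite or quickly verify that a two-dimensional affine UFD over algebraically closed $k$ admitting a nonzero LND has trivial units. Granting $A^* = k^*$, Lemma \ref{pic} yields $A = k^{[1]} = k[u]$.

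Now with $A = k[u]$, I would show the local slice can be chosen with $D(r) \in k^*$. Consider the set $\{D(r) : r \text{ a local slice of } D\} \cup \{0\}$ — this is the ideal $D(\operatorname{Ker} D^2)$ of $A$ (more precisely, the ideal generated by the leading "plinth" ideal $\mathfrak{pl}(D) = A \cap D(B)$). Because $D$ is irreducible, this plinth ideal is not contained in any proper principal ideal $bB$ with $b \in B$ a non-unit; intersecting with $A$ and using inertness, $\mathfrak{pl}(D)$ is an ideal of the PID $A = k[u]$ that is not contained in any proper ideal — hence $\mathfrak{pl}(D) = A$, so $1 \in D(B)$, i.e.\ $D$ has a slice. (If instead one only gets $\mathfrak{pl}(D) = (f)$ for some $f \in k[u]$, irreducibility of $D$ rules out $f$ being a non-unit, since then $D(B) \subseteq fB$ contradicts irreducibility; thus $f \in k^* = A^*$ and again $1 \in D(B)$.)

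The main obstacle I anticipate is the passage from "$\mathfrak{pl}(D) = A \cap D(B)$ is not contained in a proper principal ideal of $B$" to "$\mathfrak{pl}(D) = A$": one must argue carefully that for $D$ irreducible on a UFD $B$, the plinth ideal $\mathfrak{pl}(D)$, viewed in $A$, generates the unit ideal. Equivalently, the subtle point is that irreducibility of $D$ in $B$ transfers to a statement inside $A$; this uses both that $A$ is inert (so that a common divisor of $\mathfrak{pl}(D)$ in $B$ lying — after factoring out — would actually lie in $A$) and that $A$ is a PID. Establishing $A = k^{[1]}$ (via triviality of units) is the other place requiring care, and is where the hypotheses "UFD", "dimension two", and "algebraically closed $k$" are all genuinely used.
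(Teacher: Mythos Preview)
The paper does not prove this lemma; it is quoted from \cite[Lemma~2.9]{F}, so your attempt must stand on its own.

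Your claim $B^{*}=k^{*}$ is false under the stated hypotheses: $B=k[t,t^{-1},s]$ is a two-dimensional affine UFD over $k$ carrying the irreducible LND $\partial/\partial s$, yet $t\in B^{*}\setminus k^{*}$ and $A=\operatorname{Ker}D=k[t,t^{-1}]\neq k^{[1]}$. This detour is harmless, since Lemma~\ref{pic} already makes $A$ a PID without it. The genuine gap is your passage from $\mathfrak{pl}(D)=(f)$ to $D(B)\subseteq fB$. Inertness of $A$ and the PID property tell you that divisors in $B$ of elements of $A$ again lie in $A$; they do \emph{not} tell you that a generator of $A\cap D(B)$ divides every element of $D(B)$ in $B$. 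What actually closes the argument is a descent step you never describe: choose a local slice $r$ with $t=D(r)$ having the minimal number of prime factors in $A$; if $p\mid t$ is prime (hence prime in $B$ by inertness), irreducibility of $D$ forces the induced derivation $\bar D$ on the one-dimensional affine $k$-domain $B/pB$ to be nonzero, whence $\operatorname{Ker}\bar D=k$ by Lemma~\ref{inert}(iii) (this is exactly where both ``$\dim B=2$'' and ``$k$ algebraically closed'' are used). Since $\bar D(\bar r)=\bar t=0$ we get $\bar r=a\in k$, so $r'=(r-a)/p\in B$ is a local slice with $D(r')=t/p$, contradicting minimality of $t$. Hence $t\in A^{*}$ and $t^{-1}r$ is a slice. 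Your outline contains neither the reduction modulo $p$ nor the use of the one-dimensionality of $B/pB$, and the substitute you propose (``a common divisor of $\mathfrak{pl}(D)$ in $B$ would lie in $A$'') runs in the wrong direction of divisibility.
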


We now state an important result for rigid domains by Crachiola and Makar-Limanov \cite[Theorem 3.1]{CML1}. 
\begin{thm}\label{cml1}
Let $k$ be a field of characteristic zero, $C$ an affine $k$-domain and $C[T]=C^{[1]}$. Then the following hold:
\begin{enumerate}
 \item [\rm (i)]$ML(C[T]) \subseteq ML(C)$.
 \item [\rm (ii)]$C$ is rigid if and only if $ML(C[T])=C$.
\end{enumerate}
\end{thm}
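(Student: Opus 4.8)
The plan is to move locally nilpotent derivations back and forth along the inclusion $C\hookrightarrow C[T]$, and to analyse the remaining case by a degree argument on $C[T]$. For part (i), first note that $\partial/\partial T\in LND(C[T])$ has kernel $C$, so $ML(C[T])\subseteq C$. Given any $D\in LND(C)$, extend it to $\widehat D\in LND(C[T])$ by declaring $\widehat D(T)=0$; then $\widehat D$ is locally nilpotent and $Ker\,\widehat D=(Ker\,D)[T]$. Hence for $c\in ML(C[T])$ we get $c\in C\cap(Ker\,D)[T]=Ker\,D$, and as $D$ was arbitrary, $c\in ML(C)$; thus $ML(C[T])\subseteq ML(C)$.

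For part (ii), the implication ``$C$ not rigid $\Rightarrow ML(C[T])\ne C$'' is immediate from (i): if $0\ne D\in LND(C)$ then $ML(C)\subseteq Ker\,D\subsetneq C$, so $ML(C[T])\subseteq ML(C)\subsetneq C$. The substantial direction is that a rigid $C$ forces $ML(C[T])=C$; since $ML(C[T])\subseteq C$ is known, I would show $E(C)=0$ for every $E\in LND(C[T])$. Assume $E\ne 0$ and set $m:=\deg_T E=\max\{\deg_T E(f)-\deg_T f\}$ over $f$ with $E(f)\ne 0$; this is finite because $C[T]$ is affine, and $m\ge -1$ since $E$ acts nontrivially on some algebra generator (an element of $C$, or $T$). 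By the standard homogenization technique for locally nilpotent derivations, the leading component $E_m$ of $E$ with respect to the $T$-grading is a nonzero homogeneous element of $LND(C[T])$; write $E_m(c)=\delta(c)T^m$ for $c\in C$ (with $\delta\in Der_k(C)$) and $E_m(T)=\lambda T^{m+1}$ with $\lambda\in C$. The goal is to prove $m=-1$: once this is established, $\deg_T E(c)\le\deg_T c+m=-1$ for every $c\in C$, so $E(c)=0$, whence $C\subseteq Ker\,E$ and $C\subseteq ML(C[T])$, finishing the proof.

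To pin down $m$ I would argue by cases. If $m\le -2$, then $E_m$ kills both $C$ and $T$, so $E_m=0$, a contradiction. If $m=0$, then $E_m$ preserves $C$ and restricts there to the locally nilpotent derivation $\delta$, which vanishes because $C$ is rigid; then $E_m=\lambda T(\partial/\partial T)$ with $\lambda\in C$ and $E_m|_C=0$, so $E_m^n(T)=\lambda^n T$, forcing $\lambda=0$ and $E_m=0$, again impossible. The main case is $m\ge 1$, where $E_m(C[T])\subseteq T\,C[T]$. If $\lambda=0$, then $E_m=T^m\widehat\delta$ with $\widehat\delta$ the extension of $\delta$ satisfying $\widehat\delta(T)=0$; since $T\in Ker\,\widehat\delta$ one gets $E_m^n=T^{mn}\widehat\delta^n$, so $\widehat\delta$ is locally nilpotent, hence so is $\delta=\widehat\delta|_C$, hence $\delta=0$ and $E_m=0$ --- a contradiction. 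If $\lambda\ne 0$, choose the largest $N$ with $E_m^N(T)\ne 0$ (so $N\ge 1$ and $E_m^N(T)\in Ker\,E_m$); by homogeneity $E_m^N(T)=qT^{1+Nm}$ with $0\ne q\in C$ and $1+Nm\ge 2$, and since $Ker\,E_m$ is inert in $C[T]$ by Lemma \ref{inert}, splitting off $q$ and a power of $T$ forces $T\in Ker\,E_m$, i.e.\ $E_m(T)=0$, contradicting $\lambda\ne 0$. Thus $m\ne 0$ and $m\not\ge 1$, while $m\ge -1$, so $m=-1$, as required. I expect the decisive point to be the case $m\ge 1$ with $\lambda\ne 0$, where the inertness of the kernel of a locally nilpotent derivation (Lemma \ref{inert}) is precisely what excludes a homogeneous locally nilpotent derivation of positive degree; the homogenization step (that $E_m\in LND(C[T])$) is the other ingredient that must be invoked with care.
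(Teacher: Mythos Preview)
The paper does not prove Theorem~\ref{cml1}; it quotes it from Crachiola--Makar-Limanov \cite[Theorem 3.1]{CML1}. So there is no in-paper argument to compare against.

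Your proof is correct. Part (i) and the easy direction of (ii) are straightforward and match the standard reasoning. For the hard direction of (ii), your approach---grading $C[T]$ by $T$-degree, passing to the top homogeneous component $E_m$ (which is again locally nilpotent), and eliminating $m\ge 0$ case by case using rigidity of $C$ together with inertness of $\operatorname{Ker} E_m$---is exactly the filtration/homogenization technique that Crachiola and Makar-Limanov use in \cite{CML1}. The case analysis is clean: $m\le -2$ is vacuous; $m=0$ forces the restricted derivation $\delta$ on $C$ to vanish and then $E_m=\lambda T\,\partial/\partial T$ cannot be locally nilpotent unless $\lambda=0$; for $m\ge 1$ with $\lambda=0$ the factorization $E_m^n=T^{mn}\widehat\delta^{\,n}$ pushes nilpotence down to $\delta$; and for $m\ge 1$ with $\lambda\ne 0$ the element $E_m^N(T)=qT^{1+Nm}\in\operatorname{Ker} E_m$ combined with inertness (Lemma~\ref{inert}) forces $T\in\operatorname{Ker} E_m$, a contradiction.

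The only point worth making explicit is the homogenization step: that the highest-degree component of a locally nilpotent derivation with respect to a $\bZ$-grading on a domain is again locally nilpotent. You flag this, and it is indeed a standard lemma (see e.g.\ \cite[Principle 11 and Corollary 1.30]{F}); citing it would make the write-up self-contained.
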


The following result gives a characterization of $k^{[1]}$ in terms of the Makar-Limanov invariant \cite[Lemma 2.3]{CML2}.
\begin{thm}\label{cml2}
Let $k$ be a field of characteristic zero and $A$ an affine $k$-domain with \\ $\td_{k}A=1$ such that $k$ is algebraically 
closed in $A$. Then $A=k^{[1]}$ if it has a non-zero locally nilpotent derivation. 
\end{thm}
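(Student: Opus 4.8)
The plan is to show that an affine $k$-domain $A$ with $\td_k A = 1$, $k$ algebraically closed in $A$, and a non-zero locally nilpotent derivation $D$, must be a polynomial ring $k^{[1]}$. First I would invoke Lemma \ref{inert}(i) and (iii): since $D \neq 0$, the kernel $A_0 := \operatorname{Ker} D$ is an inert subring of $A$ with $\td_{A_0} A = 1$, hence $\td_k A_0 = 0$. Because $A_0$ is algebraically closed in $A$ (being inert) and $A$ is algebraic over $A_0$... wait, more carefully: $A_0$ is a subring of $A$ with $\td_k A_0 = 0$, so $A_0$ is algebraic over $k$; since $k$ is algebraically closed in $A$ and $A_0 \subseteq A$, we get $A_0 = k$. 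Thus $\operatorname{Ker} D = k$.

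Next I would produce a slice. Pick any $r \in A$ with $Dr \neq 0$; then $t := Dr \in \operatorname{Ker} D = k \setminus \{0\}$, so $t$ is a non-zero scalar and $r/t$ is a genuine slice: $D(r/t) = 1$. Now the Slice Theorem (Theorem \ref{st}(a)) applies directly: $A = A_0[r/t] = k[r/t]$ with $D = \partial/\partial(r/t)$. Since $A_0 = k$, this gives $A = k[s] = k^{[1]}$ where $s = r/t$, and we are done. (One should check $s$ is transcendental over $k$, but that is immediate since $\td_k A = 1$ and $A = k[s]$; alternatively, if $s$ were algebraic over $k$ it would lie in $k$ as $k$ is algebraically closed in $A$, forcing $A = k$, contradicting $\td_k A = 1$.)

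The main (and essentially only) subtlety is the step identifying $\operatorname{Ker} D$ with $k$: one must be careful to use that inert subrings are algebraically closed in $B$ (stated in the ``Definitions'' paragraph and reiterated in Lemma \ref{inert}) together with the transcendence-degree count $\td_{A_0}A = 1$ from Lemma \ref{inert}(iii), rather than assuming $A_0$ is affine or finitely generated. Everything else is a direct application of the Slice Theorem, so there is no real obstacle here; the work has already been packaged into Lemma \ref{inert} and Theorem \ref{st}. The only alternative route I can see — avoiding the Slice Theorem — would be to quote the structure of one-dimensional rings admitting an LND directly, but the Slice Theorem argument above is cleaner and self-contained within the results already available in the excerpt.
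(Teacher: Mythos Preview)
Your overall strategy is right, but there is one genuine slip. You write ``Pick any $r \in A$ with $Dr \neq 0$; then $t := Dr \in \operatorname{Ker} D = k \setminus \{0\}$.'' That implication is false for arbitrary $r$: in $k[x]$ with $D = \partial/\partial x$, the element $r = x^2$ has $Dr = 2x \notin k$. What you need is a \emph{local slice}, i.e.\ an $r$ with $Dr \neq 0$ and $D^2 r = 0$; such an $r$ exists for any nonzero locally nilpotent $D$ (take any $b \notin \operatorname{Ker} D$, let $n$ be minimal with $D^{n+1}b = 0$, and set $r := D^{n-1}b$). With that choice $Dr \in \operatorname{Ker} D = k \setminus \{0\}$, and then your scaling $s = r/t$ and the appeal to Theorem~\ref{st}(a) go through exactly as you wrote. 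The identification $\operatorname{Ker} D = k$ via Lemma~\ref{inert}(iii) and the hypothesis that $k$ is algebraically closed in $A$ is correct as you argued it.

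Regarding comparison with the paper: Theorem~\ref{cml2} is not proved in the paper at all --- it is quoted as a preliminary result from \cite{CML2} with no argument supplied. So there is nothing to compare your approach to; your (corrected) proof simply fills in what the paper cites, using only Lemma~\ref{inert} and Theorem~\ref{st} from the preliminaries.
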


We now recall a result on the Makar-Limanov invariant of Danielewski surfaces \cite[Theorem 9.1]{F}.
\begin{thm}\label{dan}
Let $k$ be an algebraically closed field of characteristic zero and $$B:=\frac{k[X,Y,Z]}{(X^{n}Z-p(Y))}$$ where 
 $n \in \bN$ and $p(Y) \in k[Y]$. Let $x$ be the image of $X$ in $B$. Then the following hold:
\begin{enumerate}
 \item [\rm (i)]If $n=1$ or if $\deg p(Y)=1$, then $ML(B)=k$.
 \item [\rm (ii)]If $n \geq 2$ and $\deg p(Y) \geq 2$, then $ML(B)=k[x]$. Moreover, $Ker$ $D=k[x]$ for every 
  non-zero $D \in LND(B)$.
\end{enumerate}
\end{thm}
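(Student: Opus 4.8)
\emph{Part (i).} If $\deg p(Y)=1$, say $p(Y)=aY+b$ with $a\in k^{*}$, then $y=a^{-1}(x^{n}z-b)$, so $k[X,Z]\to B$, $X\mapsto x,\ Z\mapsto z$, is an isomorphism (the defining polynomial is monic of degree one in $Y$), $B=k^{[2]}$, and $ML(B)=k$. If $n=1$, I would use the two locally nilpotent derivations of $B$ induced by the derivations $X\mapsto 0,\ Y\mapsto X,\ Z\mapsto p'(Y)$ and $Z\mapsto 0,\ Y\mapsto Z,\ X\mapsto p'(Y)$ of $k[X,Y,Z]$ (both kill $XZ-p(Y)$; local nilpotence is checked on the generators, e.g.\ $D_{1}^{\,j}(z)=p^{(j)}(y)x^{j-1}$). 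Since $B_{x}=k[x^{\pm1}]^{[1]}$, $B_{z}=k[z^{\pm1}]^{[1]}$, and $x,z$ are non-units of $B$, one gets $k[x^{\pm1}]\cap B=k[x]$, $k[z^{\pm1}]\cap B=k[z]$, whence (as $k(x)$, $k(z)$ are algebraically closed in $\operatorname{Frac}(B)$) $k[x]$, $k[z]$ are algebraically closed in $B$; with $\td_{k}\operatorname{Ker}D_{i}=1$ (Lemma \ref{inert}) this forces $\operatorname{Ker}D_{1}=k[x]$, $\operatorname{Ker}D_{2}=k[z]$. As $k[x,z]\cong k^{[2]}$ inside $B$, $ML(B)\subseteq k[x]\cap k[z]=k$.

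\emph{Part (ii): the easy containment, and a reduction.} Assume $n\ge 2$ and $d:=\deg p(Y)\ge 2$. The derivation $X\mapsto 0,\ Y\mapsto X^{n},\ Z\mapsto p'(Y)$ of $k[X,Y,Z]$ kills $X^{n}Z-p(Y)$ and descends to $D_{0}\in LND(B)$ with $k[x]\subseteq\operatorname{Ker}D_{0}$; as in (i) ($B_{x}=k[x^{\pm1}]^{[1]}$, $x$ a non-unit, $k(x)$ algebraically closed in $\operatorname{Frac}(B)=k(x,y)$), $k[x]$ is algebraically closed in $B$, so $\operatorname{Ker}D_{0}=k[x]$ and hence $ML(B)\subseteq k[x]$. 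It therefore suffices to prove the ``moreover'' clause, namely $x\in\operatorname{Ker}D$ for every nonzero $D\in LND(B)$: for then $k[x]\subseteq\operatorname{Ker}D$, both of transcendence degree one with $k[x]$ algebraically closed in $B$, so $\operatorname{Ker}D=k[x]$, and intersecting over all such $D$ (using $D_{0}$) gives $ML(B)=k[x]$.

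\emph{Part (ii): the key step.} Fix a nonzero $D\in LND(B)$ and set $A:=\operatorname{Ker}D$; by Lemma \ref{inert}, $A$ is inert and algebraically closed in $B$ and $\td_{k}A=1$. Inertness gives $y\notin A$ (else $x^{n}z=p(y)\in A$ forces $x,z\in A$, i.e.\ $A=B$). Localizing at $S:=A\setminus\{0\}$ and writing $L:=\operatorname{Frac}(A)$, Lemma \ref{inert} shows $D$ extends to a nonzero locally nilpotent derivation of the affine $L$-domain $B_{A}:=S^{-1}B$, with kernel $L$, which is algebraically closed in $B_{A}$; since $\td_{L}B_{A}=1$, Theorem \ref{cml2} gives $B_{A}=L^{[1]}$, and after rescaling $B_{A}=L[s]$ with $D=\partial/\partial s$; moreover $L\cap B=A$. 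Write $x=f(s),\ y=g(s),\ z=h(s)$ in $L[s]$, of $s$-degrees $\alpha,\beta,\gamma$. If $x\in L$ then $x\in L\cap B=A$ and we are done, so suppose $\alpha\ge 1$; since $y\notin A$, $\beta\ge 1$, and $\gamma\ge 1$ (the case $z\in A$ is excluded by an argument of the same type, using Theorem \ref{cml2} for $k(z)[x,y]/(x^{n}z-p(y))$, which is not isomorphic to $k(z)^{[1]}$). Comparing $s$-degrees and leading coefficients in $f^{n}h=p(g)$ yields
\begin{equation*}
n\alpha+\gamma=d\beta,\qquad f_{\alpha}^{\,n}\,h_{\gamma}=c_{d}\,g_{\beta}^{\,d},
\end{equation*}
with $f_{\alpha},g_{\beta},h_{\gamma}\in L^{*}$ the leading coefficients and $c_{d}\in k^{*}$ that of $p$. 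It remains to derive a contradiction from the full identity $f^{n}h=p(g)$, together with the fact that $k[f(s),g(s),h(s)]=B$ is a two-dimensional domain with fraction field $L(s)$, whenever $n,d\ge 2$. I would argue by induction on $\alpha+\beta+\gamma$: using that $A$ is a one-dimensional affine domain, one extracts a suitable element of $A$ permitting a change of slice $s\mapsto s-a$ and a cancellation of common factors of $f_{\alpha},g_{\beta},h_{\gamma}$ in $L$, strictly lowering the degrees, until one reaches a base case incompatible with $f^{n}h=p(g)$ for $n,d\ge 2$. Equivalently, this is the computation $ML\bigl(k[X,Y,Z]/(X^{n}Z-c_{d}Y^{d})\bigr)=k[X]$ for the homogeneous Danielewski surface occurring as the associated graded ring of $B$ under a suitable positive weighting of $X,Y,Z$.

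\emph{The main obstacle.} Part (i), the construction of $D_{0}$, and the set-up in the key step are routine uses of Lemma \ref{inert} and Theorem \ref{cml2}. The genuine difficulty is the final reduction: the structural statement that no triple $f,g,h\in L[s]$ of positive $s$-degree with $f^{n}h=p(g)$ can generate over $k$ a domain with fraction field $L(s)$ once $n,d\ge 2$ — i.e.\ Makar-Limanov's theorem that the monomial Danielewski surface $k[X,Y,Z]/(X^{n}Z-c_{d}Y^{d})$ has Makar--Limanov invariant $k[X]$ — and all the real work lies in the degree- and leading-coefficient bookkeeping that it requires.
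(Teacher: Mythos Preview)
The paper does not prove this theorem at all: it is quoted in the preliminaries as \cite[Theorem 9.1]{F} and used as a black box. So there is no ``paper's own proof'' to compare against; what you have written is an attempted direct proof of a result the authors simply import.

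Your Part (i) is fine, and the easy containment $ML(B)\subseteq k[x]$ in Part (ii) is also fine. For the hard direction of Part (ii) you set up the standard reduction correctly (pass to $L[s]$ with $L=\operatorname{Frac}(\operatorname{Ker}D)$, write $x=f(s)$, $y=g(s)$, $z=h(s)$, compare degrees in $f^{n}h=p(g)$), but you then stop exactly at the real content of the theorem and say so yourself. Two concrete gaps remain. First, the exclusion of $z\in A$ is not adequately justified: your appeal to Theorem \ref{cml2} requires knowing that $k(z)[X,Y]/(zX^{n}-p(Y))$ is \emph{not} $k(z)^{[1]}$, and this needs an argument (for some $n,d\ge 2$ and some $p$ the projective closure is a smooth conic with a rational point, so smoothness alone is not enough; one must check the points at infinity, or argue differently). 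Second, and more seriously, the inductive degree-lowering you sketch (``change of slice and cancel common leading coefficients'') is exactly Makar-Limanov's filtration argument, and carrying it out is the whole proof: one has to show that an LND of $B$ is compatible with the weight filtration making $X^{n}Z-c_{d}Y^{d}$ homogeneous, pass to the associated graded, and then do the homogeneous case by hand. You have identified this as the obstacle but not removed it; as written, Part (ii) is an outline rather than a proof.
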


\section{A characterization of $k^{[2]}$}
In this section we will describe an algebraic characterization of $k^{[2]}$ over a field $k$ of characteristic zero 
(Theorem \ref{a2}). We also investigate properties of a two-dimensional affine $k$-domain (say $B$) such that 
$ML(B)=ML^{*}(B)$.
We first begin with a few general lemmas. 


\begin{lem}\label{lem1}
Let $k$ be a field of characteristic zero, $C$ an affine $k$-domain and $C[T]=C^{[1]}$. 
Then the following hold:
\begin{enumerate}
 \item [\rm (i)]$ML^{*}(C[T]) \subseteq ML(C)$.
 \item [\rm (ii)]$C$ is rigid if and only if $ML^{*}(C[T])=C$.
\end{enumerate}
\end{lem}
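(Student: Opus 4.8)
The plan is to mimic and extend Theorem \ref{cml1}, replacing $ML$ by $ML^*$ where it helps and otherwise leaning on Theorem \ref{cml1} itself. For part (i), the natural first move is to exploit the obvious element of $LND^*(C[T])$: the derivation $\delta = \partial/\partial T$ on $C[T]$ is locally nilpotent, has $T$ as a slice, and has $\operatorname{Ker}\delta = C$. Hence by definition $ML^*(C[T]) \subseteq C$. To get the sharper inclusion $ML^*(C[T]) \subseteq ML(C)$, I would take an arbitrary $D \in LND(C)$ and extend it to $\widetilde D \in LND(C[T])$ by setting $\widetilde D(T)=0$; this $\widetilde D$ is locally nilpotent with kernel $(\operatorname{Ker}D)[T]$. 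The point is that $\widetilde D$ admits a slice in $C[T]$: indeed, since $1 \in \operatorname{Im}(\delta)$ and $ML^*$ only sees derivations with a slice, I should combine $\widetilde D$ with $\delta$. Concretely, if $c \in C$ is a local slice for $D$ with $D(c)=a \in \operatorname{Ker}D$, then on $C[T]$ the derivation $\widetilde D$ has the local slice $c$, but to manufacture an actual slice one passes to a suitable construction; alternatively, one observes that for $f \in ML^*(C[T]) \subseteq C$ and any $D \in LND(C)$, applying $\widetilde D$ (which one checks lies in the relevant class after possibly composing with the Dixmier map / using that $ML^*(C[T])$ is already inside $C$) forces $D(f)=0$. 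Thus $f \in \bigcap_{D \in LND(C)} \operatorname{Ker}D = ML(C)$.

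For part (ii), one direction is immediate: if $C$ is rigid, then $LND(C)=\{0\}$, so $ML(C)=C$, and by part (i) together with $ML^*(C[T]) \subseteq C$ we get $C \subseteq ML^*(C[T]) \subseteq ML(C) = C$ — wait, this needs the reverse inclusion too. So more carefully: if $C$ is rigid, then by Theorem \ref{cml1}(ii) we have $ML(C[T]) = C$; since $ML(C[T]) \subseteq ML^*(C[T])$ always (a smaller index set yields a larger intersection, as $LND^*(C[T]) \subseteq LND(C[T])$), and since $ML^*(C[T]) \subseteq C$ from the first paragraph, we conclude $C = ML(C[T]) \subseteq ML^*(C[T]) \subseteq C$, hence $ML^*(C[T]) = C$. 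For the converse, suppose $ML^*(C[T]) = C$; I want to deduce $C$ is rigid. If $C$ were not rigid, there is a non-zero $E \in LND(C)$; extend it to $\widetilde E \in LND(C[T])$ with $\widetilde E(T)=0$. The derivation $\widetilde E$ need not have a slice, but I can perturb: the key trick (this is the analogue of what makes Theorem \ref{cml1} work) is that $E$ has a local slice $c \in C$ with $E(c) = a \neq 0$, $a \in \operatorname{Ker}E$, and then the derivation $D' := aE$ extended with $D'(T) = $ something, or rather one builds a derivation of $C[T]$ having $T$-related slice — concretely one checks that there is $D^* \in LND^*(C[T])$ whose kernel does not contain some element of $C$, contradicting $ML^*(C[T]) = C$.

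The main obstacle I anticipate is exactly this last step: producing, from a non-zero locally nilpotent derivation of $C$, a locally nilpotent derivation of $C[T]$ that genuinely possesses a \emph{slice} (not merely a local slice) and whose kernel is a proper subring of $C[T]$ omitting some element of $C$. The clean way is: given non-zero $E \in LND(C)$ with local slice $c$, $E(c) = a \in \operatorname{Ker}E \setminus \{0\}$, define $D^* \in LND(C[T])$ by $D^*|_C = E$ and $D^*(T) = a$; then $D^*$ is locally nilpotent on $C[T]$ (as $T$ is mapped into $\operatorname{Ker}E \subseteq \operatorname{Ker}D^*$), and $s := c - T$ wait that is not quite a slice either since $D^*(c-T)=a-a=0$; instead take $D^*(T)=a$ and note $D^*(c) = a$ as well, so actually one needs $D^*(T)$ to be a unit times... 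Let me instead set $D^*(T) = 1$ is impossible since that forces $1 \in C$-combinations inconsistently — rather, the right construction uses that $E$ extended to $C[T]$ with $E(T)=0$ has $T$ as a new indeterminate and then one applies the Dixmier/localization machinery of Lemma \ref{inert}(ii) to get a slice after inverting $a$, then argues that $ML^*$ is unaffected. Resolving this circle — i.e.\ pinning down the exact derivation with an honest slice — is where the real work lies; everything else is bookkeeping with the definitions of $ML$, $ML^*$, kernels, and the inclusion $LND^*(B) \subseteq LND(B)$.
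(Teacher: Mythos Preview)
Your proposal has a genuine gap at the one crucial step, and you in fact stumble over the correct construction and then discard it. For part (i), the paper does \emph{not} extend $D\in LND(C)$ to $C[T]$ by $\widetilde D(T)=0$ and then try to manufacture a slice; it simply sets $\widetilde D(T)=1$. This is a perfectly well-defined $k$-derivation on $C[T]=C^{[1]}$ (any derivation of $C$ extends uniquely to $C[T]$ once you specify the image of $T$, and you may specify any element of $C[T]$), and it is locally nilpotent: $\widetilde D^{\,2}(T)=\widetilde D(1)=0$, and for $c\in C$ one has $\widetilde D^{\,n}(c)=D^n(c)$, so on a monomial $cT^m$ each application of $\widetilde D$ either lowers the $T$-degree or lowers the $D$-nilpotency index of the coefficient. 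Thus $\widetilde D\in LND^*(C[T])$ with slice $T$. Since also $\partial/\partial T\in LND^*(C[T])$ with kernel $C$, one gets
\[
ML^*(C[T])\ \subseteq\ \operatorname{Ker}\widetilde D\ \cap\ \operatorname{Ker}\tfrac{\partial}{\partial T}\ =\ \operatorname{Ker}\widetilde D\cap C\ =\ \operatorname{Ker} D,
\]
and intersecting over all $D\in LND(C)$ yields $ML^*(C[T])\subseteq ML(C)$. Your sentence ``set $D^*(T)=1$ is impossible since that forces $1\in C$-combinations inconsistently'' is simply wrong; there is no inconsistency.

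Once (i) is established this way, your argument for (ii) in the direction ``$C$ rigid $\Rightarrow ML^*(C[T])=C$'' is fine (sandwiching between $ML(C[T])$ and $C$ via Theorem~\ref{cml1}). For the converse you again get lost trying to build an explicit slice; but with (i) in hand it is immediate: $ML^*(C[T])=C$ together with $ML^*(C[T])\subseteq ML(C)$ forces $C\subseteq ML(C)$, hence $ML(C)=C$ and $C$ is rigid. No separate construction is needed.
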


\begin{proof}
(i) Given $D \in LND(C)$, we can extend $D$ to $\tilde{D} \in LND^{*}(C[T])$ by $\tilde{D}T=1$. Then: 
$$ML^{*}(C[T]) \subseteq Ker \text{~} \tilde{D} \cap Ker \text{~} \frac{\partial}{\partial T}=Ker \text{~} \tilde{D} \cap C 
=Ker \text{~} D.$$
Thus, for any $D\in LND(C)$, we have $ML^*(C[T]) \subseteq {\text{Ker~}}D$, and hence  $ML^*(C[T])  \subseteq ML(C)$.

(ii) Now suppose $ML^{*}(C[T])=C$. Then part (i) implies $C \subseteq ML(C)$, so $C$ is rigid. Conversely, if $C$ is 
rigid, then by Theorem \ref{cml1} and part (i) we have: 
$$C=ML(C[T]) \subseteq ML^{*}(C[T]) \subseteq ML(C)=C.$$ Hence $ML^{*}(C[T])=C$.
\end{proof}

Note that for an arbitrary affine $k$-domain $B$ of dimension one, 
$ML(B)=ML^{*}(B)$.  We have the following result on the equality 
of $ML(B)$ and $ML^{*}(B)$ for affine domains of dimension greater than one.
\begin{lem}\label{lem2}
Let $k$ be a field of characteristic zero and $B$ be an affine $k$-domain of dimension $n\ge 2$. If $\td_{k}ML^{*}(B)=n-1$, 
then $ML(B)=ML^{*}(B)$, $B=C^{[1]}$ for some rigid subring $C$ of $B$ and $B$ is a semi-rigid ring.
\end{lem}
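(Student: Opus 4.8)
The plan is to exploit the hypothesis $\td_k ML^*(B) = n-1$ together with the inertness properties from Lemma \ref{inert}. First I would pick any $D \in LND^*(B)$ (note $LND^*(B) \neq \emptyset$, for otherwise $ML^*(B) = B$ would force $\td_k B = n-1$, contradicting $\dim B = n$), with a slice $s \in B$ and kernel $A := Ker\ D$. By the Slice Theorem \ref{st}(a), $B = A[s] = A^{[1]}$, and by \ref{st}(c), $A$ is an affine $k$-domain; since $\td_k A = n-1$. Now $ML^*(B) \subseteq A$, and by Lemma \ref{inert} $ML^*(B)$ is algebraically closed (indeed inert) in $B$, hence in $A$; as $\td_k ML^*(B) = n-1 = \td_k A$, an inert (hence algebraically closed) subring of the same transcendence degree inside the affine domain $A$ must actually equal $A$. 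Therefore $ML^*(B) = A = Ker\ D$.

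Next I would show $A$ is rigid. By Lemma \ref{lem1}(ii) applied to $C = A$ (using $B = A^{[1]}$), it suffices to show $ML^*(A^{[1]}) = A$, i.e. $ML^*(B) = A$, which we have just established. Hence $A$ is rigid, and we may take $C := A$, giving $B = C^{[1]}$ with $C$ rigid.

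It remains to identify $ML(B)$ and to prove semi-rigidity. Since $A$ is rigid, Theorem \ref{cml1}(ii) gives $ML(B) = ML(A^{[1]}) = A$, so $ML(B) = A = ML^*(B)$. For semi-rigidity: any nonzero $E \in LND(B)$ has kernel $Ker\ E$ with $\td_A(Ker\ E)$ forcing (via Lemma \ref{inert}(iii)) $\td_k Ker\ E = n-1$; and $A = ML(B) \subseteq Ker\ E$ with $A$ algebraically closed in $B$ of transcendence degree $n-1$, so again $Ker\ E = A$. Thus every nonzero locally nilpotent derivation of $B$ has kernel $A$; a standard fact about locally nilpotent derivations over a field of characteristic zero with a common kernel $A$ over which $B = A^{[1]}$ then shows $LND(B) = \{fD \mid f \in A\}$, i.e. $B$ is semi-rigid.

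The main obstacle I anticipate is the very first step: verifying cleanly that $LND^*(B) \neq \emptyset$ and that one may choose $D$ with an honest slice (not merely a local slice), so that the Slice Theorem applies and delivers $B = A^{[1]}$ with $A$ affine; once $B = A^{[1]}$ is in hand, the rest is a sequence of transcendence-degree-plus-inertness comparisons and invocations of Theorem \ref{cml1} and Lemma \ref{lem1}. A secondary point requiring a little care is the last implication — passing from "all nonzero $E \in LND(B)$ share the kernel $A$" to the explicit description $LND(B) = \{fD \mid f \in A\}$ — which uses that over $A = ML(B)$ with $B = A[s]$, any such $E$ is $A$-linear and hence determined up to an element of $A$ by its value on $s$.
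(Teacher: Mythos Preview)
Your proof is correct and follows essentially the same route as the paper: pick $D\in LND^{*}(B)$, use the Slice Theorem to write $B=A^{[1]}$ with $A=Ker\,D$, compare transcendence degrees of the algebraically closed subrings $ML^{*}(B)\subseteq A$ to get equality, then invoke Lemma~\ref{lem1}(ii) and Theorem~\ref{cml1} to conclude rigidity of $A$ and $ML(B)=A$, and finally repeat the transcendence-degree comparison for an arbitrary nonzero $E\in LND(B)$ to obtain semi-rigidity. Your ``secondary point'' is not actually an obstacle: the paper records in its Definitions that, for $B$ with $LND(B)\neq\{0\}$, semi-rigidity is \emph{equivalent} to $Ker\,E=ML(B)$ for all nonzero $E\in LND(B)$, so once you have shown every nonzero kernel equals $A$ you are done without needing to argue separately that $LND(B)=\{fD\mid f\in A\}$.
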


\begin{proof}
 Since $ML^{*}(B) \neq B$, by Theorem \ref{st}, there exist an $(n-1)$-dimensional subring 
$C$ of $B$ such that $B= C^{[1]}$ and $ML^{*}(B) \subseteq C$. 
Since $\td_{k}ML^{*}(B)=n-1 =\td_{k}C$ and both $ML^*(B)$ and $C$ are algebraically closed in $B$, we have 
$ML^{*}(B) = C$. Hence, by Lemma \ref{lem1} (ii),  $C$ is  a rigid ring. 
Therefore, by Theorem \ref{cml1}, $ML(B)=C= ML^{*}(B)$. 

Let $D (\neq 0) \in LND(B)$ and $A= Ker~D$. Since $ML(B)\subseteq A$, $\td_{k}ML(B) = n-1= \td_k A$ and both 
$ML(B)$ and $A$ are algebraically closed in $B$, we have $A=ML(B)$, i.e., 
$Ker~D= ML(B)$ for all $D (\neq 0) \in LND(B)$. Thus  $B$ is semi-rigid.
\end{proof}

\begin{lem}\label{alrigid}
Let $k$ be a field of characteristic zero and $B$ an affine $k$-domain such that $B$ is a semi-rigid ring.  
Then the following are equivalent:
\begin{enumerate}
 \item [\rm (I)]$ML(B)=ML^{*}(B)$.
 \item [\rm (II)]There exists a $k$-subalgebra $C$ of $B$ such that $C$ is rigid and $B=C^{[1]}$.
 \item [\rm (III)]$ML(B)$ is rigid and $B={ML(B)}^{[1]}$.
\end{enumerate}
\end{lem}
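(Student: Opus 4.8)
The plan is to establish the cycle of implications $(III) \Rightarrow (II) \Rightarrow (I) \Rightarrow (III)$. The implication $(III) \Rightarrow (II)$ is immediate: if $ML(B)$ is rigid and $B = {ML(B)}^{[1]}$, then $C := ML(B)$ is the required $k$-subalgebra. For $(II) \Rightarrow (I)$, suppose $B = C^{[1]}$ with $C$ rigid; note first that $C$ is affine since $C^{[1]} = B$ is. By Theorem \ref{cml1}(ii), rigidity of $C$ yields $ML(B) = ML(C^{[1]}) = C$, and by Lemma \ref{lem1}(ii) it likewise yields $ML^{*}(B) = ML^{*}(C^{[1]}) = C$; hence $ML(B) = ML^{*}(B)$. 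Observe that neither of these two implications uses the hypothesis that $B$ is semi-rigid.

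The substance of the proof is $(I) \Rightarrow (III)$, and this is where semi-rigidity enters. Since $B$ is semi-rigid there is a non-zero $D_0 \in LND(B)$, and $ML(B) = Ker~D_0$, which is a proper subring of $B$ because $D_0 \neq 0$ (Lemma \ref{inert}(iii)). By $(I)$ we then have $ML^{*}(B) = ML(B) \subsetneq B$, so the convention ``$ML^{*}(B) = B$ when $LND^{*}(B) = \emptyset$'' forces $LND^{*}(B) \neq \emptyset$. Choose $D \in LND^{*}(B)$ with a slice $s \in B$; then $D \neq 0$, so by semi-rigidity $Ker~D = ML(B)$. Put $A := ML(B) = Ker~D$. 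The Slice Theorem (Theorem \ref{st}) applied to $D$ gives $B = A[s] = A^{[1]}$, and also that $A$ is affine. Consequently $ML(A^{[1]}) = ML(B) = A$, and Theorem \ref{cml1}(ii), read in the direction ``$ML(C[T]) = C \Rightarrow C$ rigid'' with $C = A$, shows that $A = ML(B)$ is rigid. Together with $B = A^{[1]} = {ML(B)}^{[1]}$, this is precisely $(III)$.

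The only genuinely delicate point is securing $LND^{*}(B) \neq \emptyset$ from condition $(I)$; once that is in place the argument is a direct application of the Slice Theorem and the Crachiola--Makar-Limanov criterion (Theorem \ref{cml1}), with semi-rigidity used exactly to identify $Ker~D$ with $ML(B)$ for the slice-admitting derivation $D$. I would present the three implications in the order above, keeping the bookkeeping of which results require $B$ affine (they all do, and this is available by hypothesis, with the affineness of the relevant kernels supplied by Theorem \ref{st}(c)).
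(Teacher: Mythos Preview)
Your proof is correct and essentially parallel to the paper's, with the cycle run in the opposite direction $(III)\Rightarrow(II)\Rightarrow(I)\Rightarrow(III)$ rather than $(I)\Rightarrow(II)\Rightarrow(III)\Rightarrow(I)$. The one substantive difference is how the rigidity of $C=ML(B)$ is obtained: the paper argues directly that any nonzero $d\in LND(C)$ would extend to $\tilde d\in LND(B)$ with $\tilde d(s)=0$, giving $Ker~\tilde d\neq C$ and contradicting semi-rigidity, whereas you deduce it from Theorem~\ref{cml1}(ii) via $ML(C^{[1]})=ML(B)=C$. Both routes are short; the paper's is marginally more self-contained, while yours makes the dependence on the Crachiola--Makar-Limanov criterion explicit and, as you note, shows that only the implication $(I)\Rightarrow(III)$ actually uses semi-rigidity.
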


\begin{proof}
${\rm (I)} \Rightarrow {\rm (II)}$
Let $D$ be a non-zero locally nilpotent derivation such that 
 $LND(B)=\{ fD ~ | ~ f \in Ker$ $D\}$ and $C:=Ker$ $D$. Since $ML^{*}(B)=ML(B)$, $D$ has a slice, say $s$. 
 Thus by Theorem \ref{st}, $B=C[s]=C^{[1]}$. 
 If $LND(C) \neq \{0\}$ and $d (\neq 0) \in LND(C)$, then $d$ extends to 
 $\tilde{d} \in LND(B)$ with $\tilde{d}(s)=0$. But then $Ker~\tilde{d}\neq C$, contradicting that $B$ is 
semi-rigid. Thus $LND(C) = \{0\}$, i.e., $C$ is rigid. 

${\rm (II)} \Rightarrow {\rm (III)}$ Trivial.

${\rm (III)} \Rightarrow {\rm (I)}$ Follows from Lemma \ref{lem1} (ii).
\end{proof}

As a consequence we have the following sufficient condition for equality of the two invariants $ML(B)$ and $ML^{*}(B)$ 
for a two-dimensional affine domain $B$. 
\begin{lem}\label{2dim}
Let $k$ be a field of characteristic zero and $B$ a two-dimensional affine $k$-domain. Suppose that $ML^{*}(B) \neq B$. 
Then $ML^{*}(B)=ML(B)$.
\end{lem}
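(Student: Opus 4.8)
The plan is to exploit the hypothesis $ML^{*}(B) \neq B$ to obtain a locally nilpotent derivation with a slice, and then to split the argument according to the transcendence degree of $ML^{*}(B)$ over $k$.

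First I would observe that $ML^{*}(B) \neq B$ means, by definition, that $LND^{*}(B) \neq \emptyset$; so I fix some $D \in LND^{*}(B)$ with slice $s$ and set $A = Ker~D$. By the Slice Theorem (Theorem \ref{st}), $B = A[s] = A^{[1]}$, hence $\td_{k}B = \td_{k}A + 1$, and since $B$ is a two-dimensional affine domain this gives $\td_{k}A = 1$ (equivalently, $D \neq 0$ so Lemma \ref{inert}(iii) applies). Because $ML^{*}(B) \subseteq Ker~D = A$, it follows that $\td_{k}ML^{*}(B) \leqslant 1$. I would also note at the outset that $LND^{*}(B) \subseteq LND(B)$ forces $ML(B) \subseteq ML^{*}(B)$ unconditionally, so the whole task reduces to proving the reverse inclusion $ML^{*}(B) \subseteq ML(B)$.

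If $\td_{k}ML^{*}(B) = 1$, then with $n = \dim B = 2$ the hypothesis $\td_{k}ML^{*}(B) = n - 1$ of Lemma \ref{lem2} is met, and that lemma immediately yields $ML(B) = ML^{*}(B)$. If instead $\td_{k}ML^{*}(B) = 0$, then every element of $ML^{*}(B)$ is algebraic over $k$; but any $D' \in LND(B)$ annihilates each $b \in B$ that is algebraic over $k$, since if $p(b) = 0$ is the minimal polynomial then $0 = D'(p(b)) = p'(b)\,D'(b)$ with $p'(b) \neq 0$ in characteristic zero, forcing $D'(b) = 0$. Hence $ML^{*}(B) \subseteq Ker~D'$ for every $D' \in LND(B)$, i.e.\ $ML^{*}(B) \subseteq ML(B)$, and combined with the trivial reverse inclusion this finishes the proof.

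The only step requiring genuine care is the bound $\td_{k}ML^{*}(B) \leqslant 1$: this is precisely where the existence of a locally nilpotent derivation admitting a slice (guaranteed by $ML^{*}(B) \neq B$) is essential, since it is the Slice Theorem together with $\dim B = 2$ that pins down $\td_{k}A = 1$. Once that bound is in place, both cases are formal, using only Lemma \ref{lem2} and the elementary fact that locally nilpotent derivations kill elements algebraic over the ground field.
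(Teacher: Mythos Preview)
Your proof is correct and follows essentially the same route as the paper: bound $\td_{k}ML^{*}(B) \leqslant 1$, invoke Lemma~\ref{lem2} when $\td_{k}ML^{*}(B)=1$, and dispose of the $\td_{k}ML^{*}(B)=0$ case by noting that $ML^{*}(B)$ then lies in the algebraic closure of $k$ in $B$, which is contained in $ML(B)$. The only cosmetic difference is that where you reprove directly that derivations in characteristic zero annihilate algebraic elements, the paper simply cites the consequence of Lemma~\ref{inert} that $ML(B)$ is algebraically closed in $B$.
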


\begin{proof}
Clearly $\td_{k}ML^{*}(B) \le 1$. Since $ML(B)$ and $ML^{*}(B)$ are algebraically closed subrings of $B$ and $ML(B) 
\subseteq ML^{*}(B)$, it is enough to  consider the case $\td_{k}ML^{*}(B) = 1$.  The result now follows from Lemma \ref{lem2}.
\end{proof}

Example \ref{danex} presents a two-dimensional affine domain $B$ for which $ML(B) \subsetneqq ML^{*}(B)=B$. However the 
following consequence of Lemma \ref{Fufd} shows that such an example is not possible when $B$ is a UFD. 

\begin{cor}\label{ufddim2}
Let $k$ be an algebraically closed field of characteristic zero and $B$ 
a two-dimensional affine $k$-domain such that $B$ is a UFD.
Then $ML(B)=ML^{*}(B)$.
\end{cor}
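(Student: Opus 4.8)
The plan is to reduce everything to Lemma~\ref{2dim}, which already asserts that $ML^{*}(B)=ML(B)$ as soon as $ML^{*}(B)\neq B$. So the entire content of the corollary is the assertion that a two-dimensional affine UFD $B$ over an algebraically closed field $k$ of characteristic zero cannot satisfy $ML^{*}(B)=B$ unless $ML(B)=B$ too; equivalently, if $B$ is not rigid then $LND^{*}(B)\neq\emptyset$.

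First I would dispose of the trivial case: if $LND(B)=\{0\}$, then $B$ is rigid, so $ML(B)=ML^{*}(B)=B$ and there is nothing to prove. Hence assume there is a non-zero $D\in LND(B)$. Since $B$ is a UFD (in particular a gcd domain), I would replace $D$ by an irreducible locally nilpotent derivation: letting $a$ be a gcd of a finite generating set of the ideal generated by $D(B)$, one writes $D=aD'$, where $D'$ is the derivation of $B$ determined in the domain $B$ by $D(b)=aD'(b)$, its image lies in no proper principal ideal, and $D'$ is again locally nilpotent with $Ker~D'=Ker~D$ (as $a\neq 0$). This produces a non-zero irreducible element $D'\in LND(B)$.

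Now Lemma~\ref{Fufd} applies directly: every non-zero irreducible element of $LND(B)$ admits a slice, so $D'$ has a slice and therefore $LND^{*}(B)\neq\emptyset$, i.e. $ML^{*}(B)\neq B$. Lemma~\ref{2dim} then yields $ML^{*}(B)=ML(B)$, as desired. The only step that is not purely formal is the reduction of a locally nilpotent derivation to an irreducible one over a UFD --- specifically, the fact that dividing an LND by the gcd of its image again yields a \emph{locally nilpotent} derivation, and not merely a derivation; this is standard (cf.\ Freudenburg's monograph \cite{F}) and I would simply cite it rather than reprove it. Beyond this bookkeeping I do not anticipate any genuine obstacle.
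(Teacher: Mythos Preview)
Your proposal is correct and follows essentially the same route as the paper: both reduce to Lemma~\ref{2dim} after using Lemma~\ref{Fufd} to show $LND^{*}(B)\neq\emptyset$ whenever $ML(B)\neq B$. The only difference is that you spell out the passage from a non-zero LND to an irreducible one via the gcd in a UFD, whereas the paper leaves this step implicit in its invocation of Lemma~\ref{Fufd}.
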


\begin{proof}
If $ML(B) = B$, then by definition, we have $ML^*(B) = B$. 
Now if $ML(B) \neq B$, then by Lemma \ref{Fufd}, $LND^{*}(B) \neq \emptyset$, i.e., $ML^{*}(B) \neq B$ and 
hence $ML(B)=ML^{*}(B)$ by Lemma \ref{2dim}.
\end{proof}

%
%
We have the following properties of a two-dimensional affine domain whenever the two invariants 
$ML(B)$ and $ML^*(B)$ are same. 
\begin{prop}\label{dim2}
Let $k$ be a field of characteristic zero and $B$ a two-dimensional affine $k$-domain. If $ML(B)=ML^{*}(B)$, then $ML(B)$ is 
rigid and $B$ is a polynomial ring over $ML(B)$.
\end{prop}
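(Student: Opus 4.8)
The plan is to split into the two cases according to whether $ML(B) = B$ or $ML(B) \neq B$, with the second case being the substantive one. If $ML(B) = B$, then $B$ has no non-zero locally nilpotent derivation, so $B$ is rigid by definition, and there is nothing more to prove (a rigid ring is trivially a polynomial ring in zero variables over itself — but here dimension two, so this case will actually have to be ruled out or handled by noting that $ML(B)=B$ is allowed and the conclusion holds vacuously with $B$ ``rigid'' and $B = B^{[0]}$; I should state this carefully). So assume $ML(B) \neq B$. Then since $ML(B) = ML^*(B)$, we get $ML^*(B) \neq B$ as well, which means $LND^*(B) \neq \emptyset$.

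The key step is then to invoke Lemma \ref{lem2}. For that I need $\td_k ML^*(B) = n - 1 = 1$, i.e. that $ML^*(B)$ is one-dimensional. Since $ML^*(B) \neq B$, there is a non-zero $D \in LND^*(B)$, and $ML^*(B) \subseteq \mathrm{Ker}\,D$; by Lemma \ref{inert}(iii), $\td_k \mathrm{Ker}\,D = \dim B - 1 = 1$, so $\td_k ML^*(B) \leq 1$. I must also exclude $\td_k ML^*(B) = 0$: if $\td_k ML^*(B) = 0$ then, since $ML^*(B)$ is algebraically closed in $B$ (Lemma \ref{inert}) and $B$ is an affine domain over $k$, we would get $ML^*(B) = k$ (here one uses that $k$ is the algebraic closure of $k$ in $B$ along the subring, or more simply that an algebraically closed subring of transcendence degree zero over $k$ inside a domain containing $k$ equals the relative algebraic closure of $k$); in that degenerate situation $ML(B) = ML^*(B) = k$ has transcendence degree $0 = n - 2$, which is not covered by Lemma \ref{lem2} directly. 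However, the cleaner route is simply: since $ML(B) = ML^*(B)$ and $ML^*(B) \neq B$, Lemma \ref{2dim} applies and already gives $ML^*(B) = ML(B)$ — but that is what we assumed. So instead I will argue directly that the hypothesis $ML(B) = ML^*(B) \neq B$ forces $\td_k ML^*(B) = 1$: indeed if $\td_k ML^*(B) = 0$ then $ML^*(B) = k$, hence $LND^*(B) \neq \emptyset$ and every $D \in LND^*(B)$ has $\mathrm{Ker}\,D \supseteq k$ with $\td_k \mathrm{Ker}\,D = 1$, and applying the Slice Theorem to such a $D$ gives $B = \mathrm{Ker}\,D^{[1]}$ with $\mathrm{Ker}\,D$ one-dimensional affine; but then $\mathrm{Ker}\,D$ may or may not be rigid — this is exactly where the equality $ML = ML^*$ is used again, and it is getting circular.

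The honest approach, then: assume $ML(B) = ML^*(B) \neq B$. Pick any $D \in LND^*(B)$ with slice $s$ and set $A = \mathrm{Ker}\,D$. By the Slice Theorem (Theorem \ref{st}), $B = A[s] = A^{[1]}$ and $A$ is an affine $k$-domain of dimension $1$. By Theorem \ref{cml1}(i), $ML(B) = ML(A[s]) \subseteq ML(A)$, and by Lemma \ref{lem1}(i) applied with $C = A$, $ML^*(B) = ML^*(A[s]) \subseteq ML(A)$. Now I claim $A = ML(B)$: we have $ML(B) \subseteq A$ (since $ML(B) \subseteq \mathrm{Ker}\,D = A$), and conversely $A \subseteq ML(A)$ would give rigidity — so instead use that $\td_k A = 1 = \td_k ML^*(B)$ once we know $\td_k ML^*(B) = 1$; and $\td_k ML^*(B) = 1$ because $ML^*(B) \subseteq A$ with equality of transcendence degrees being exactly the content we must extract. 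To break the loop: since $ML(B) = ML^*(B)$, $D$ itself has a slice, and by Lemma \ref{alrigid} (the equivalence ${\rm (I)} \Leftrightarrow {\rm (III)}$), provided $B$ is semi-rigid, we would be done. So the real plan is: first establish $B$ is semi-rigid, then apply Lemma \ref{alrigid}. For semi-rigidity, take $D \in LND^*(B) \subseteq LND(B)$ nonzero with $A = \mathrm{Ker}\,D$, so $B = A^{[1]}$ with $\dim A = 1$; any $A'$ that is the kernel of another nonzero LND of $B$ is a one-dimensional inert (hence algebraically closed) affine subring, and since $B = A^{[1]}$ has a unique (up to the above) such structure when $A$ is not itself $k^{[1]}$... this needs care. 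I expect the main obstacle to be precisely pinning down why $\td_k ML^*(B) = 1$ (equivalently, why $B$ is semi-rigid) rather than $0$, and I would resolve it by showing that $ML^*(B) = k$ is impossible under $ML(B) = ML^*(B) \neq B$: if $ML^*(B) = k$ then in particular $ML(B) = k$, and then combining Lemma \ref{2dim} with Theorem \ref{a2} (the characterization of $k^{[2]}$, referenced as Theorem \ref{a2} in the section intro) would force $B = k^{[2]}$, whence $ML^*(B) = k = ML(B)$ is consistent but $B = k^{[2]}$ is certainly not rigid-over-nothing — wait, $k^{[2]} = k^{[1]^{[1]}}$ with $k^{[1]}$ \emph{not} rigid, so ${\rm (III)}$ of Lemma \ref{alrigid} fails, contradicting semi-rigidity; hence this case genuinely cannot arise and $\td_k ML^*(B) = 1$. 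With $\td_k ML^*(B) = 1 = n - 1$ established, Lemma \ref{lem2} delivers everything at once: $ML(B) = ML^*(B)$ is rigid and $B = ML(B)^{[1]}$.
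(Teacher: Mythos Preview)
Your proposal has a genuine gap: you try to exclude the case $\td_k ML^{*}(B)=0$, but this case is not excluded---it occurs precisely when $B=L^{[2]}$ (with $L$ the algebraic closure of $k$ in $B$), and the conclusion of the Proposition then holds with $ML(B)=L$ rigid and $B=L^{[2]}$. Your argument that this case ``cannot arise'' is erroneous: you invoke Theorem~\ref{a2} to force $B=k^{[2]}$, but Theorem~\ref{a2} is proved \emph{using} Proposition~\ref{dim2}, so this is circular; and your subsequent appeal to Lemma~\ref{alrigid} to derive a contradiction fails because Lemma~\ref{alrigid} has ``$B$ semi-rigid'' as a standing hypothesis, and $B=k^{[2]}$ is \emph{not} semi-rigid. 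So no contradiction is obtained, and the case stands unhandled.

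The paper's proof splits directly on $\td_k ML(B)\in\{2,1,0\}$. The case $\td_k ML(B)=2$ is trivial. The case $\td_k ML(B)=1$ is handled exactly as you wanted, via Lemma~\ref{lem2} (giving semi-rigidity) and then Lemma~\ref{alrigid}(III). The case $\td_k ML(B)=0$ requires a separate direct argument: since $ML^{*}(B)\neq B$, the Slice Theorem gives $B=C^{[1]}$ for some one-dimensional affine $C$; now $C$ cannot be rigid, for otherwise Theorem~\ref{cml1} would give $ML(B)=C$, contradicting $\td_k ML(B)=0$; hence $C$ admits a nonzero LND, and Theorem~\ref{cml2} forces $C=L^{[1]}$, so $B=L^{[2]}$. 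This last step is the missing idea in your attempt.
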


\begin{proof}
 Suppose $\td_{k}ML(B)=2$. Then $B$ is rigid and $ML(B)=B$. 
 
 Now suppose $\td_{k}ML(B)=1$. Then by Lemma \ref{lem2}, $B$ is semi-rigid and hence by Lemma \ref{alrigid} (III),  
 $ML(B)$ is rigid and $B={ML(B)}^{[1]}$.
 
 Finally, suppose $\td_{k}ML(B)=0$. Then we have $ML(B)=ML^{*}(B)=L$, where $L$ is the algebraic closure of $k$ in $B$. 
 Since $ML^{*}(B) \neq B$, by Theorem \ref{st}, 
 there exists a one-dimensional subring $C$ of $B$ such that $B=C^{[1]}$. 
 Now $C$ is not rigid, otherwise by Theorem \ref{cml1}, $ML(B)=ML({C}^{[1]})=C$ contradicting that $ML(B)=L$. Hence, 
 by Theorem \ref{cml2}, $C=L^{[1]}$. Thus $B=L^{[2]}$.
\end{proof}

\begin{rem}
{\em The proof of Proposition \ref{dim2} shows that for a two-dimensional affine domain $B$ over a field $k$ of 
characteristic zero satisfying $ML(B)=ML^{*}(B)$ we have the following three cases: \\
(i) If $\td_{k}ML(B)=2$, then $B$ is rigid and $ML(B)=B$. \\
(ii) If $\td_{k}ML(B)=1$, then $B$ is semi-rigid and $B=C^{[1]}$ where $C=ML(B)$ is rigid. \\
(iii) If $\td_{k}ML(B)=0$, then $B=L^{[2]}$, where $L=ML(B)$ is the algebraic closure of $k$ in $B$.
}
\end{rem}

As a consequence of Lemma  \ref{2dim} and Proposition \ref{dim2}, we have the following characterization of the affine 
$2$-space.
\begin{thm}\label{a2}
Let $k$ be a field of characteristic zero and $B$ a two-dimensional affine $k$-domain. 
Then the following are equivalent:
\begin{enumerate}
 \item [\rm (I)]$B=k^{[2]}$.
 \item [\rm (II)]$ML^{*}(B)=k$.
 \item [\rm (III)]$ML(B)=k$ and $ML^{*}(B) \neq B$.
\end{enumerate}
\end{thm}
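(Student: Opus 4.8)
The plan is to establish the cycle of implications $\mathrm{(I)} \Rightarrow \mathrm{(II)} \Rightarrow \mathrm{(III)} \Rightarrow \mathrm{(I)}$, drawing on the lemmas and propositions already proved. The implication $\mathrm{(I)} \Rightarrow \mathrm{(II)}$ is immediate: since $k^{[2]}$ carries the locally nilpotent derivation $\partial/\partial Y$ with slice $Y$ and kernel $k[X]$, and similarly $\partial/\partial X$ with kernel $k[Y]$, and these kernels are already in $LND^*(B)$, one gets $ML^*(k^{[2]}) \subseteq k[X] \cap k[Y] = k$; the reverse containment is trivial. (This is also recorded in the introduction as $ML^*(k^{[n]}) = k$.)

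The implication $\mathrm{(II)} \Rightarrow \mathrm{(III)}$ is also essentially immediate: if $ML^*(B) = k$, then since $k \neq B$ (as $\dim B = 2$), we have $ML^*(B) \neq B$; and since $ML(B) \subseteq ML^*(B) = k$ while always $k \subseteq ML(B)$, we get $ML(B) = k$ as well. So the real content is in $\mathrm{(III)} \Rightarrow \mathrm{(I)}$. Here we assume $ML(B) = k$ and $ML^*(B) \neq B$. By Lemma \ref{2dim}, the hypothesis $ML^*(B) \neq B$ forces $ML^*(B) = ML(B)$, and combining with $ML(B) = k$ gives $ML^*(B) = ML(B) = k$. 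Now invoke Proposition \ref{dim2}: since $ML(B) = ML^*(B)$, the conclusion is that $ML(B)$ is rigid and $B$ is a polynomial ring over $ML(B)$, i.e.\ $B = k^{[1]}$ over $ML(B) = k$... but wait, $B$ is two-dimensional, so more carefully: $B = ML(B)^{[1]} = k^{[1]}$ would be one-dimensional, which is a contradiction unless we are in the case $\td_k ML(B) = 0$. Indeed, the trichotomy in the Remark following Proposition \ref{dim2} resolves this: since $\td_k k = 0$, we are in case (iii), where $B = L^{[2]}$ with $L$ the algebraic closure of $k$ in $B$; but $ML(B) = k$ and $ML(B)$ is algebraically closed in $B$ (Lemma \ref{inert}), so $L = k$ and hence $B = k^{[2]}$.

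The main obstacle, such as it is, lies entirely in the already-established Proposition \ref{dim2} (and behind it the Slice Theorem and Theorem \ref{cml2}); given those, the theorem itself is a short deduction. The one point requiring slight care is to correctly read off, from the trichotomy, that the hypothesis $ML(B) = k$ forces the third case ($\td_k ML(B) = 0$) rather than the rigid or semi-rigid cases — these are excluded because $ML(B) = k$ has transcendence degree $0$ over $k$, and then the identification $L = k$ uses that $ML(B)$ is algebraically closed in $B$. So the proof reads: $\mathrm{(I)} \Rightarrow \mathrm{(II)}$ by direct computation; $\mathrm{(II)} \Rightarrow \mathrm{(III)}$ since $B$ is two-dimensional and $ML(B) \subseteq ML^*(B)$; and $\mathrm{(III)} \Rightarrow \mathrm{(I)}$ by applying Lemma \ref{2dim} to get $ML^*(B) = ML(B) = k$, then Proposition \ref{dim2} together with the fact that $\td_k ML(B) = 0$ to conclude $B = L^{[2]}$, then $L = k$ from algebraic closedness of $ML(B) = k$ in $B$.
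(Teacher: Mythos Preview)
Your proof is correct and follows essentially the same route as the paper: the easy implications $\mathrm{(I)}\Rightarrow\mathrm{(II)}\Rightarrow\mathrm{(III)}$, then Lemma~\ref{2dim} followed by Proposition~\ref{dim2} for $\mathrm{(III)}\Rightarrow\mathrm{(I)}$. Your detour through the trichotomy in the Remark is unnecessary, though---Proposition~\ref{dim2} already says $B$ is a polynomial ring (in some number of variables) over $ML(B)=k$, and since $\dim B=2$ this immediately gives $B=k^{[2]}$; the momentary worry that ``polynomial ring over $ML(B)$'' means $ML(B)^{[1]}$ is a misreading.
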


\begin{proof}
Clearly ${\rm (I)} \Rightarrow {\rm (II)} \Rightarrow {\rm (III)}$. We now show that 
${\rm (III)} \Rightarrow {\rm (I)}$. Since $ML^{*}(B) \neq B$, by Lemma \ref{2dim} we have $ML^{*}(B)=ML(B)$.
Therefore, as $ML(B)=k$, we have, by Proposition \ref{dim2}, $B=k^{[2]}$.  
\end{proof}

\section{A characterization of $k^{[3]}$}
In this section we will describe an algebraic characterization of $k^{[3]}$ over an algebraically closed field $k$ of characteristic 
zero (Theorem \ref{a3}).
We also investigate properties of a three-dimensional affine $k$-domain (say $B$) over an algebraically closed
field of characteristic zero for which $ML(B)=ML^{*}(B)$. 

We first state a result for a polynomial ring in two variables over a one-dimensional affine UFD.
\begin{lem}\label{ufd}
Let $k$ be an algebraically closed field of characteristic zero, $R$ a one-dimensional affine UFD and $B:=R^{[2]}$. 
Then either  $B=k^{[3]}$ or $ML(B)=R$.
\end{lem}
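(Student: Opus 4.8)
The plan is to reduce to Lemma~\ref{pic}, which says a one-dimensional affine UFD over an algebraically closed field of characteristic zero has the form $R = k[t, 1/p(t)]$ with $k[t] = k^{[1]}$, and then split into the two cases $R = k^{[1]}$ and $R \ne k^{[1]}$. If $R = k^{[1]}$, then $B = R^{[2]} = k^{[3]}$ and we are done. So the substance is to show: if $R$ is a one-dimensional affine UFD with $R \ne k^{[1]}$ (equivalently, by Lemma~\ref{pic}, $R^* \supsetneq k^*$), then $ML(R^{[2]}) = R$.

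First I would establish $R \subseteq ML(R^{[2]})$. Write $B = R[U,V] = R^{[2]}$. For any $D \in LND(B)$, I claim $R \subseteq \operatorname{Ker} D$. Since $R$ is a UFD with a nontrivial unit, Lemma~\ref{pic} gives a non-constant unit $u \in R^* \setminus k^*$; more usefully, $R = k[t,1/p(t)]$ so every nonzero element of $R$ factors into primes of $R$ and units of $R$. The key point is that units of $B$ are units of $R$ (since $B = R[U,V]$ and $R$ is a domain, $B^* = R^*$), and a locally nilpotent derivation kills all units: $D(u) = 0$ for $u \in B^*$ because $u$ is invertible and $D$ is locally nilpotent (if $D(u) \ne 0$, then $u$ would not be algebraic over $\operatorname{Ker} D$ in the appropriate sense — more directly, $0 = D(1) = D(u \cdot u^{-1}) = u^{-1}D(u) + u\,D(u^{-1})$, and iterating $D$ on a unit in a domain forces $D(u)=0$ since the transcendence degree constraint of Lemma~\ref{inert}(iii) plus inertness of $\operatorname{Ker} D$ shows units lie in the kernel — indeed $\operatorname{Ker} D$ is inert, and $u \cdot u^{-1} = 1 \in \operatorname{Ker} D$ forces $u \in \operatorname{Ker} D$). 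Now $k[t]$ is integral over $k[u]$ where $u$ is chosen appropriately, or more simply: $t$ is algebraic over the subfield generated by a single nontrivial unit only if... — here I need to be careful. The cleanest route: since $\operatorname{Ker} D$ is inert in $B$ (Lemma~\ref{inert}(i)) hence algebraically closed in $B$, and $R$ is algebraic over any subring containing a transcendence basis; but $R$ has transcendence degree $1$, so if $\operatorname{Ker} D$ contains one non-constant element of $R$, it contains all of $R$ by algebraic closedness. And $\operatorname{Ker} D$ does contain the non-constant unit $u \in R^*$ by inertness (from $u u^{-1} = 1$). Hence $R \subseteq \operatorname{Ker} D$ for every $D \in LND(B)$, giving $R \subseteq ML(B)$.

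For the reverse inclusion $ML(B) \subseteq R$, I would exhibit locally nilpotent derivations of $B = R[U,V]$ whose kernels intersect in $R$. The natural candidates are $D_1 = \partial/\partial U$ with $\operatorname{Ker} D_1 = R[V]$, and $D_2 = \partial/\partial V$ with $\operatorname{Ker} D_2 = R[U]$; then $\operatorname{Ker} D_1 \cap \operatorname{Ker} D_2 = R[U] \cap R[V] = R$. Therefore $ML(B) \subseteq R[U] \cap R[V] = R$. Combining, $ML(B) = R$. This half is routine.

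The main obstacle, and the step deserving care, is the argument that every $D \in LND(R^{[2]})$ kills $R$ in the case $R \ne k^{[1]}$ — specifically pinning down why the inert kernel must swallow all of $R$ once it contains a single non-constant unit. The mechanism is: $\operatorname{Ker} D$ is algebraically closed in $B$ (Lemma~\ref{inert}); it contains $k$ and the non-constant unit $u \in R^*$; hence it contains the algebraic closure of $k(u)$ in $B$; since $\td_k R = 1$ and $u$ is non-constant, $R$ is algebraic over $k(u)$, so $R$ is contained in this algebraic closure, i.e., $R \subseteq \operatorname{Ker} D$. I would want to double-check the edge case where $R^* = k^*$ does not occur — but that is exactly the case $R = k^{[1]}$ excluded here (by Lemma~\ref{pic}, $R^* = k^* \Rightarrow R = k^{[1]}$), so if $R \ne k^{[1]}$ then genuinely $R^* \supsetneq k^*$ and the non-constant unit exists. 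With that, the proof closes cleanly.
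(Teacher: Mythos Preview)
Your proof is correct and follows essentially the same approach as the paper: reduce via Lemma~\ref{pic} to the two cases $R=k^{[1]}$ and $R\neq k^{[1]}$, and in the latter case use a non-constant unit (the paper uses $p(t)$ itself) together with inertness/algebraic-closedness of kernels to force $R\subseteq ML(B)$. The paper's write-up is terser---it works directly with $ML(B)$ rather than each $\operatorname{Ker} D$, and leaves the easy inclusion $ML(R^{[2]})\subseteq R$ implicit---but the mathematical content is the same; your explicit use of $\partial/\partial U$ and $\partial/\partial V$ for that inclusion is a welcome clarification.
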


\begin{proof}
By Lemma \ref{pic}, $R=k[t, \frac{1}{p(t)}]$ where $k[t]=k^{[1]}$ and $p(t) \in k[t] \setminus \{0\}$. Now, either 
$p(t) \in k$ or $p(t) \notin k$. If $p(t) \in k$, then $R=k^{[1]}$ and $B=k^{[3]}$. If $p(t) \notin k$, then 
$ML(B)=R$ since $p(t) \in ML(B)$ and $ML(B)$ is an inert subring of $B$.
\end{proof}

The next result shows that if a three-dimensional affine UFD $B$ admits two non-zero distinct locally nilpotent derivations 
with slices, then there exists a $k$-subalgebra $R$ of $B$, such that $B=R^{[2]}$. Example \ref{dim4ex} shows that such a 
result does not extend to a four-dimensional affine UFD.
\begin{lem}\label{dim3}
Let $k$ be an algebraically closed field of characteristic zero and $B$ an affine $k$-domain such that $B$ is a UFD and 
dim $B=3$. If $B$ admits two non-zero distinct locally nilpotent derivations 
with slices, then there exists a $k$-subalgebra $R$ of $B$, such that $R$ is a UFD and $B=R^{[2]}$. 
\end{lem}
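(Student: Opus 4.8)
The plan is to exploit the two given locally nilpotent derivations $D_1, D_2 \in LND^*(B)$ with $\operatorname{Ker} D_1 \neq \operatorname{Ker} D_2$. By the Slice Theorem (Theorem \ref{st}), writing $A_i = \operatorname{Ker} D_i$ we get $B = A_i^{[1]}$, and each $A_i$ is a two-dimensional affine $k$-domain, inert in $B$ (Lemma \ref{inert}), hence a UFD. The strategy is to locate a common one-dimensional subring $R$ sitting inside both $A_1$ and $A_2$ and then to promote the relation $B = R^{[2]}$ from a suitable localization back to $B$ itself. The natural candidate for $R$ is built from the intersection of the kernels, or more precisely from $\operatorname{Ker} D_1 \cap \operatorname{Ker} D_2$ together with its algebraic closure in $B$; since $D_1 \neq D_2$ as derivations-up-to-kernel, one expects $\td_k(A_1 \cap A_2) = 1$.

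First I would analyze the restriction of (an extension of) $D_2$ to $A_1$. Since $B = A_1[s_1]$ with $D_1 = \partial/\partial s_1$, and $D_2$ does not have the same kernel, $D_2$ restricted appropriately should induce a nonzero locally nilpotent derivation on a localization of $A_1$; set $R_0 = A_1 \cap A_2$, so that $\td_k R_0 = 1$ (using Lemma \ref{inert}(iii) applied to the induced derivation on $A_1$, after passing to the fraction field of $R_0$ if necessary). Then $R_0$ is algebraically closed in $B$ because it is the intersection of inert subrings, hence inert, hence algebraically closed. Next I would pass to $S^{-1}B$ where $S = R_0 \setminus \{0\}$: over the field $S^{-1}R_0$, both extended derivations have slices and two-dimensional kernels $S^{-1}A_1, S^{-1}A_2$, and $\td_{S^{-1}R_0} S^{-1}B = 2$ with $S^{-1}B$ still a UFD. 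Applying Theorem \ref{cml2} to the one-dimensional $S^{-1}R_0$-subrings one obtains that $S^{-1}A_i = (S^{-1}R_0)^{[1]}$, and then $S^{-1}B = (S^{-1}R_0)^{[2]}$ by a second application of the Slice Theorem; this identifies a free polynomial structure over the function field.

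The remaining, and genuinely delicate, step is the descent: from $S^{-1}B = (S^{-1}R_0)^{[2]}$ back to a polynomial structure $B = R^{[2]}$ over an honest two-dimensional\,—\,wait, one-dimensional\,—\,affine UFD $R \supseteq R_0$, $R$ with the same fraction field as $R_0$. Here I would use Nagata's criterion (Lemma \ref{nag}) to control the UFD property: since $R_0 = k[t, 1/p(t)]$ or $R_0 = k^{[1]}$ by Lemma \ref{pic} (its units are under control as it is algebraically closed in $B$ with $B$ a UFD), one inverts a single prime element and checks that the localized ring is a polynomial ring, then climbs back. Concretely, I would choose generators of $B$ over $R_0$ coming from the slices $s_1$ of $D_1$, clear denominators, and argue that after adjoining finitely many elements $g_1, \dots, g_m \in R_0$'s fraction field that actually lie in $B$ (they must, by inertness/integral-closedness arguments), the ring $R := R_0[g_1, \dots, g_m] \subseteq B$ is a one-dimensional affine UFD with $B = R^{[2]}$.

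The main obstacle I anticipate is precisely this descent: showing that the polynomial-ring structure over the fraction field actually comes from a polynomial-ring structure over a finitely generated, \emph{factorial} one-dimensional subring $R$, rather than merely over some localization. The subtlety is that a priori the two slices/coordinates valid over $S^{-1}R_0$ might have denominators from $S$ that cannot be absorbed; ruling this out requires using the UFD hypothesis on $B$ essentially, together with Lemma \ref{ufd} (which says $R^{[2]}$ over a one-dimensional affine UFD is either $k^{[3]}$ or has $ML = R$) to pin down $R$ as an inert\,—\,hence factorial\,—\,subring of $B$. I would expect the argument to invoke that $ML(B) \subseteq A_1 \cap A_2 = R_0$ and that $R$, being squeezed between $R_0$ and its fraction field inside the UFD $B$, inherits factoriality from Nagata's lemma after inverting the prime $p(t)$.
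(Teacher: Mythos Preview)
Your descent step is a genuine gap, and it is not clear it can be filled along the lines you sketch. Knowing that $S^{-1}B = (S^{-1}R_0)^{[2]}$ over the field $S^{-1}R_0$ gives almost no purchase on whether $B$ itself is a polynomial ring in two variables over an affine subring: ``polynomial ring over a localization implies polynomial ring over the base'' is a hard statement in general, and nothing in your outline (clearing denominators, Nagata's criterion, inertness) actually forces the two coordinates you find over $S^{-1}R_0$ to live in $B$ or to generate $B$ over some affine $R$. Invoking Lemma~\ref{ufd} does not help either, since that lemma \emph{assumes} $B = R^{[2]}$ rather than producing it.

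The paper's proof sidesteps the descent entirely, and the missing idea is simply Lemma~\ref{Fufd}. You already have $B = A_1[s_1] = A_1^{[1]}$ with $A_1$ a two-dimensional affine UFD (inert in $B$). Since $ML(B) \subseteq A_1 \cap A_2 \subsetneqq A_1$, Theorem~\ref{cml1} forces $A_1$ to be non-rigid (if $A_1$ were rigid then $ML(B) = ML(A_1^{[1]}) = A_1$). Now apply Lemma~\ref{Fufd} directly to the two-dimensional UFD $A_1$: some irreducible LND on $A_1$ has a slice, so by the Slice Theorem $A_1 = R^{[1]}$ for a $k$-subalgebra $R$, and hence $B = R^{[2]}$. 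Inertness of $R$ in $A_1$ gives that $R$ is a UFD. No localization, no descent, no intersection analysis beyond the strict containment $A_1 \cap A_2 \subsetneqq A_1$.
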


\begin{proof}
Let $D_1$ and $D_2$ be two non-zero distinct locally nilpotent derivations  of $B$ with slices $s_1, s_2$. 
Let $Ker$ $D_i=C_i$ for $i=1,2$. Then, by Theorem \ref{st}, $B=C_i[s_i]={C_i}^{[1]}$ for each $i$. Now 
$ML(B) \subseteq ML^{*}(B) \subseteq C_{1} \cap C_{2} \subsetneqq C_{i}$. It follows that $C_i$ is not rigid, otherwise by Theorem \ref{cml1}, 
$ML(B)=ML({C_{i}}^{[1]})=ML(C_{i})=C_{i}$. Since $C_{1}$ is an inert subring of the UFD $B$, $C_{1}$ is a UFD. 
As $C_1$ is not rigid, by Lemma \ref{Fufd}, $C_{1}$ has a locally nilpotent derivation with a slice and therefore by Theorem \ref{st}, 
$C_{1}=R^{[1]}$ for some $k$-subalgebra $R$ of $C_{1}$. Hence $B=R^{[2]}$.
As $R$ is an inert subring of the UFD $C_1$,  $R$ is a UFD. 
\end{proof}

The following result describes a classification of three-dimensional factorial affine domains $B$ for which 
$ML(B)=ML^{*}(B)$.
\begin{prop}\label{dim3new}
Let $k$ be an algebraically closed field of characteristic zero and $B$ a three-dimensional affine UFD over $k$. If 
$ML(B)=ML^{*}(B)$, then $ML(B)$ is a rigid UFD and $B$ is a polynomial ring over $ML(B)$.
\end{prop}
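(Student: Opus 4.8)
The plan is to stratify the argument by $\td_k ML(B)$, exactly as in the two-dimensional case (Proposition \ref{dim2}), since the dimension is one higher but the tools are the same.

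\medskip

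\noindent
First I would dispose of the easy extremes. If $\td_k ML(B) = 3$, then $B$ has no non-zero locally nilpotent derivation, so $B$ is rigid, $ML(B) = B$ is trivially a rigid UFD, and $B = ML(B)^{[0]}$. If $\td_k ML(B) = 2$, then by Lemma \ref{lem2} (applied with $n = 3$, noting $\td_k ML^*(B) = \td_k ML(B) = 2 = n-1$) we get $B = C^{[1]}$ for a rigid subring $C = ML(B) = ML^*(B)$; since $C$ is inert in the UFD $B$ (Lemma \ref{inert}), $C$ is a UFD, and we are done in this case.

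\medskip

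\noindent
The substantive cases are $\td_k ML(B) \in \{0,1\}$. Here I would use that $ML^*(B) = ML(B) \neq B$ forces, by the Slice Theorem \ref{st}, a decomposition $B = C^{[1]}$ for some two-dimensional $k$-subalgebra $C$ with $ML^*(B) \subseteq C$; moreover $C$ is inert in $B$, hence a UFD of dimension two. Since $ML(B) = ML^*(B) \subsetneq C$, the ring $C$ cannot be rigid (otherwise Theorem \ref{cml1} would give $ML(B) = ML(C^{[1]}) = C$, contradicting $\td_k ML(B) \le 1 < 2 = \td_k C$). So $C$ is a non-rigid two-dimensional affine UFD, and I would invoke Corollary \ref{ufddim2} to get $ML(C) = ML^*(C)$, then Proposition \ref{dim2} to conclude that $C$ is a polynomial ring over $ML(C)$, say $C = ML(C)^{[1]}$, with $ML(C)$ rigid. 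Thus $B = C^{[1]} = ML(C)^{[2]}$. It remains to identify $ML(B)$ with $ML(C)$: by Theorem \ref{cml1}(i) applied twice, $ML(B) = ML(ML(C)^{[2]}) \subseteq ML(ML(C)^{[1]}) \subseteq ML(ML(C)) = ML(C)$ (the last equality since $ML(C)$ is rigid); conversely $ML(C)$ is rigid of dimension $\le 1$, so by Theorem \ref{cml1}(ii) iterated, $ML(ML(C)^{[2]}) = ML(C)$, giving $ML(B) = ML(C)$. Finally $ML(C)$ is inert in $C$, hence a UFD, so $ML(B)$ is a rigid UFD and $B = ML(B)^{[2]}$, completing the proof.

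\medskip

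\noindent
The main obstacle I anticipate is organizational rather than deep: making sure the transcendence-degree bookkeeping is airtight (that $ML^*(B) = ML(B)$ really does have the claimed transcendence degree in each stratum, and that one may legitimately reduce the three-dimensional problem to the two-dimensional results already proved for $C$). The case $\td_k ML(B) = 0$ needs a small extra remark — that then $ML(B)$ equals the algebraic closure $L$ of $k$ in $B$, which is $k$ itself when $k$ is algebraically closed — but $L = k$ is forced here, so this stratum simply yields $B = k^{[3]}$, consistent with the statement since $k$ is a rigid UFD. No genuinely new idea beyond the $k^{[2]}$ machinery and the two parts of Theorem \ref{cml1} should be required.
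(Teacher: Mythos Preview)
Your stratification by $\td_k ML(B)$ and your route through the two-dimensional machinery (Corollary~\ref{ufddim2} and Proposition~\ref{dim2}) is valid and is a genuine alternative to the paper's argument. The paper instead observes that $\td_k ML^{*}(B)\le 1$ forces two \emph{distinct} elements of $LND^{*}(B)$, applies Lemma~\ref{dim3} to get $B=S^{[2]}$ in one stroke, and then finishes with Lemma~\ref{ufd} and Lemma~\ref{pic}. Your approach peels off one slice, then hands the resulting two-dimensional UFD $C$ to the results of Section~3; this avoids Lemma~\ref{dim3} at the cost of re-deriving its content via Proposition~\ref{dim2}.

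There is, however, one genuine gap. The sentence ``conversely $ML(C)$ is rigid of dimension $\le 1$, so by Theorem~\ref{cml1}(ii) iterated, $ML(ML(C)^{[2]}) = ML(C)$'' does not work: Theorem~\ref{cml1}(ii) says that $R$ is rigid if and only if $ML(R^{[1]})=R$, but $R^{[1]}$ is \emph{never} rigid (it carries $\partial/\partial T$), so one cannot apply (ii) a second time to pass from $R^{[1]}$ to $R^{[2]}$. Your forward containment $ML(B)\subseteq ML(C)$ via part~(i) is fine; it is the reverse containment $ML(C)\subseteq ML(B)$ that needs a different justification. The fix is precisely Lemma~\ref{ufd} (equivalently, its proof): writing $R:=ML(C)$, if $\dim R=0$ then $R=k$ and there is nothing to do; if $\dim R=1$ then $R$ is a rigid one-dimensional affine UFD, so by Lemma~\ref{pic} and Theorem~\ref{cml2} one has $R=k[t,1/p(t)]$ with $p(t)\notin k$, whence $p(t)\in B^{*}\subseteq ML(B)$ and inertness of $ML(B)$ in $B$ forces $t\in ML(B)$, giving $R\subseteq ML(B)$. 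With this repair (and the cosmetic point that Proposition~\ref{dim2} gives $C=ML(C)^{[2]}$ rather than $ML(C)^{[1]}$ when $\td_k ML(C)=0$, which is harmless here since then $C=k^{[2]}$), your argument is complete.
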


\begin{proof}
 Suppose $\td_{k}ML(B)=3$, then $B$ is rigid and $ML(B)=B$. Now suppose $\td_{k}ML(B)=2$. Then by Lemmas 
 \ref{lem2} and \ref{alrigid}, $B={ML(B)}^{[1]}$ and $ML(B)$ is rigid. Since $B$ is a UFD, $ML(B)$ is also a UFD. 
 
 Now suppose $\td_{k}ML(B) \leq 1$. Then $B$ admits two non-zero distinct 
locally nilpotent derivations of $B$ with slices.  Hence, by Lemma \ref{dim3}, there exists a one-dimensional 
$k$-subalgebra $S$ of $B$ such that $B=S^{[2]}$. Since $B$ is a UFD, $S$ is a UFD.
If $\td_{k}ML(B)=1$, then $B\neq k^{[3]}$ and hence $S\neq k^{[1]}$. Hence, 
by Theorem \ref{cml2}, $S$ is rigid, and by Lemma \ref{pic}, 
there exists $t \in B$ such that $S=k[t,\frac{1}{p(t)}]$, where $k[t]=k^{[1]}$ and $p(t) \in k[t] \setminus k$. 
In particular, $k^{*} \subsetneqq B^{*}$. Thus $ML(B)=S$ and $B=S^{[2]}$. Again if 
$\td_{k}ML(B)=0$, then $ML(B)=k \neq S$. Hence by Lemma \ref{ufd}, $B=k^{[3]}$.
\end{proof} 

\begin{rem}
{\em The proof of Proposition \ref{dim3new} shows that for a three-dimensional affine UFD $B$ over an algebraically 
closed field $k$ of characteristic zero satisfying $ML(B)=ML^{*}(B)$ we have the following four cases: \\
(i) If $\td_{k}ML(B)=3$, then $B$ is rigid. \\
(ii) If $\td_{k}ML(B)=2$, then $B=C^{[1]}$, where $C$ is rigid. \\
(iii) If $\td_{k}ML(B)=1$, then $B=S^{[2]}$, where $S=k[t,\frac{1}{p(t)}]$ for some $p(t) \in k[t] \setminus k$. \\
(iv) If $\td_{k}ML(B)=0$, then $B=k^{[3]}$.
}
\end{rem}

The following result shows that for a three-dimensional factorial affine domain over an algebraically closed field, 
the equality of $ML(B)$ and $ML^{*}(B)$ holds whenever $ML^{*}(B) \neq B$.
\begin{lem}\label{ufddim3}
Let $k$ be an algebraically closed field of characteristic zero and $B$ an affine $k$-domain such that $B$ is a UFD and 
dim $B=3$. If $ML^{*}(B) \neq B$, then $ML(B)=ML^{*}(B)$. 
\end{lem}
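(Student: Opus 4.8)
The goal is to upgrade Lemma~\ref{2dim} (the two-dimensional case) to dimension three for UFDs, so the natural strategy is to run the same argument and handle the one genuinely new transcendence-degree value. Assume $ML^{*}(B)\ne B$, so $LND^{*}(B)\ne\emptyset$; fix $D\in LND^{*}(B)$ with slice $s$ and kernel $C=\operatorname{Ker}D$. By the Slice Theorem~\ref{st}, $B=C^{[1]}$ and $C$ is an affine $k$-domain with $\td_{k}C=2$; moreover $C$ is an inert subring of the UFD $B$, hence $C$ is a two-dimensional affine UFD over the algebraically closed field $k$. Note that $ML^{*}(B)\subseteq C$ and $ML(B)\subseteq ML^{*}(B)$, and all of these are algebraically closed in $B$ by Lemma~\ref{inert}. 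Since $\td_{k}ML^{*}(B)\le\td_{k}C=2$, I split into cases on $r:=\td_{k}ML^{*}(B)$.

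If $r=2$: then $ML^{*}(B)$ and $C$ are both algebraically closed in $B$ with the same transcendence degree, so $ML^{*}(B)=C$. By Lemma~\ref{lem1}(ii) $C$ is rigid, hence by Theorem~\ref{cml1} $ML(B)=ML(C^{[1]})=C=ML^{*}(B)$, and we are done. If $r\le 1$: I want to produce a \emph{second}, distinct locally nilpotent derivation of $B$ with a slice so that Lemma~\ref{dim3} applies. The point is that $C$ is a two-dimensional affine UFD which is \emph{not} rigid — if $C$ were rigid, then Theorem~\ref{cml1} would give $ML(B)=C$, forcing $\td_{k}ML(B)=2$, contradicting $\td_{k}ML^{*}(B)\le 1$. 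Since $C$ is a non-rigid two-dimensional affine UFD over $k$ algebraically closed, Lemma~\ref{Fufd} (applied to an irreducible element of $LND(C)$, or Corollary~\ref{ufddim2} together with Lemma~\ref{Fufd}) yields a locally nilpotent derivation of $C$ with a slice; extending it to $B$ by killing $s$ gives $D_{2}\in LND^{*}(B)$ with $\operatorname{Ker}D_{2}\supseteq C'$ for a one-dimensional kernel inside $C$, in particular $\operatorname{Ker}D_{2}\ne C=\operatorname{Ker}D$, so $D$ and $D_{2}$ are distinct. Now Lemma~\ref{dim3} provides a one-dimensional $k$-subalgebra $R$ (a UFD) with $B=R^{[2]}$. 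Apply Lemma~\ref{ufd}: either $B=k^{[3]}$, in which case $ML(B)=ML^{*}(B)=k$ trivially, or $ML(B)=R$; in the latter case $\td_{k}ML(B)=1$, so $R=ML(B)\subseteq ML^{*}(B)\subseteq C$ with $\td_{k}ML^{*}(B)=1$ as well, and since both are algebraically closed in $B$ we get $ML^{*}(B)=R=ML(B)$.

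The step I expect to need the most care is manufacturing the second derivation when $r\le 1$ and checking it is genuinely distinct from $D$: one must be sure the derivation of $C$ obtained from Lemma~\ref{Fufd}/Corollary~\ref{ufddim2} is nonzero (equivalently that $C$ is not rigid, which is exactly where the transcendence-degree bound on $ML^{*}(B)$ is used) and that its kernel, when the derivation is lifted to $B=C[s]$ by $D_{2}(s)=0$, stays inside $C$ and is therefore a proper subring of $C$, so $\operatorname{Ker}D_{2}\subsetneq C=\operatorname{Ker}D$. The rest is bookkeeping with inert subrings and transcendence degrees, exactly parallel to the proof of Lemma~\ref{2dim}, with Lemma~\ref{dim3} and Lemma~\ref{ufd} doing the three-dimensional work that Lemma~\ref{lem2} did in dimension two.
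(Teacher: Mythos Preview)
Your argument is correct and follows the same case split on $r=\td_{k}ML^{*}(B)$ as the paper, invoking Lemma~\ref{lem2} (or its ingredients) for $r=2$ and Lemmas~\ref{dim3} and~\ref{ufd} for $r\le 1$. One small correction to your final ``care'' paragraph: the lifted derivation $D_{2}$ has kernel $C'[s]$, which is \emph{not} contained in $C$ (it contains $s\notin C$), so the reason $D_{2}$ is distinct from $D$ is that $s\in\operatorname{Ker}D_{2}\setminus\operatorname{Ker}D$, not that $\operatorname{Ker}D_{2}\subsetneq C$. Also note the paper gets the two distinct slice-derivations more cheaply: if $r\le 1$ then $ML^{*}(B)$ is strictly smaller than every kernel of an element of $LND^{*}(B)$ (each such kernel has transcendence degree $2$), so by definition of $ML^{*}$ there are automatically two elements of $LND^{*}(B)$ with different kernels---your explicit construction via Lemma~\ref{Fufd} on $C$ works but is unnecessary (and in fact duplicates what the proof of Lemma~\ref{dim3} does internally).
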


\begin{proof}
Since $ML^{*}(B) \neq B$, we have $\td_{k}ML^{*}(B)\le 2$.
If $\td_{k}ML^{*}(B)=2$, then the result follows from Lemma \ref{lem2}.

Now suppose $\td_{k}ML^{*}(B)=1$. Then, by Lemma \ref{dim3},  
there exists a $k$-subalgebra $R$ of $B$ such that $R$ is a one-dimensional UFD and $B=R^{[2]}$. 
Thus  $ML^{*}(B) \subseteq R$. As both $ML^{*}(B)$ and $R$ are algebraically closed in $B$ 
and have the same transcendence degree over $k$, 
we have $ML^{*}(B)=R$.
As $ML^{*}(B)\neq k$, $B \neq k^{[3]}$ and hence 
 $ML(B)=R$ by Lemma \ref{ufd}.   Thus $ML(B)=ML^{*}(B)$.
 
If $\td_{k}ML^{*}(B)=0$, then $ML^{*}(B)=k$ and hence $ML(B)=k= ML^*(B)$.
\end{proof}

We now state our main result.
\begin{thm}\label{a3}
Let $k$ be an algebraically closed field of characteristic zero and $B$ an affine $k$-domain such that $B$ is a UFD and 
dim $B=3$. Then the following are equivalent:
\begin{enumerate}
 \item [\rm (I)]$B=k^{[3]}$.
 \item [\rm (II)]$ML^{*}(B)=k$.
 \item [\rm (III)]$ML(B)=k$ and $ML^{*}(B) \neq B$.
\end{enumerate}
\end{thm}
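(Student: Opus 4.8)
The plan is to prove the chain of implications following the same pattern as Theorem \ref{a2}, now using the dimension-three machinery developed in this section. The implications ${\rm (I)} \Rightarrow {\rm (II)} \Rightarrow {\rm (III)}$ are immediate: if $B = k^{[3]}$ then $ML^*(B) = k$ (as noted in the introduction), and $ML^*(B) = k$ forces $ML(B) = k$ (since $ML(B) \subseteq ML^*(B)$) as well as $ML^*(B) \neq B$ because $\dim B = 3 > 0$. So the entire content is in ${\rm (III)} \Rightarrow {\rm (I)}$.

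For ${\rm (III)} \Rightarrow {\rm (I)}$, I would argue as follows. Assume $ML(B) = k$ and $ML^*(B) \neq B$. Since $B$ is a three-dimensional affine UFD over an algebraically closed field of characteristic zero and $ML^*(B) \neq B$, Lemma \ref{ufddim3} applies and gives $ML(B) = ML^*(B)$. Combined with the hypothesis, this yields $ML(B) = ML^*(B) = k$. Now invoke Proposition \ref{dim3new}: since $B$ is a three-dimensional affine UFD with $ML(B) = ML^*(B)$, $B$ is a polynomial ring over $ML(B) = k$, i.e., $B = k^{[m]}$ for some $m$; since $\dim B = 3$ we get $B = k^{[3]}$. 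This completes the proof.

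The main obstacle is not in this final assembly — which is a two-line deduction from Lemma \ref{ufddim3} and Proposition \ref{dim3new} — but rather was already absorbed into those two earlier results, particularly into Lemma \ref{dim3} (reducing to the case $B = R^{[2]}$ with $R$ a one-dimensional UFD once two distinct slice-derivations are available) and Lemma \ref{Fufd}. The one point worth double-checking when writing it up is that the case $\td_k ML^*(B) = 0$ really is handled: there $ML^*(B) = k$ directly since $k$ is algebraically closed in $B$ and $ML^*(B)$ is algebraically closed in $B$ of transcendence degree zero, and then $k = ML(B) = ML^*(B)$ trivially, so Proposition \ref{dim3new} again gives $B = k^{[3]}$. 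Thus no separate argument is needed and the proof is genuinely short.

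\begin{proof}
Clearly ${\rm (I)} \Rightarrow {\rm (II)} \Rightarrow {\rm (III)}$. We show ${\rm (III)} \Rightarrow {\rm (I)}$. Since $ML^{*}(B) \neq B$, Lemma \ref{ufddim3} gives $ML^{*}(B) = ML(B)$. As $ML(B) = k$, we have $ML(B) = ML^{*}(B) = k$, and hence by Proposition \ref{dim3new}, $B$ is a polynomial ring over $ML(B) = k$. Since $\dim B = 3$, it follows that $B = k^{[3]}$.
\end{proof}
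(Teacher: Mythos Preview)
Your proof is correct and follows essentially the same approach as the paper: both argue that ${\rm (III)} \Rightarrow {\rm (I)}$ by first invoking Lemma \ref{ufddim3} to obtain $ML(B)=ML^{*}(B)$ and then applying Proposition \ref{dim3new} to conclude $B=k^{[3]}$.
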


\begin{proof}
Clearly ${\rm (I)} \Rightarrow {\rm (II)} \Rightarrow {\rm (III)}$. We now show that 
${\rm (III)} \Rightarrow {\rm (I)}$. Since $ML^{*}(B) \neq B$, by Lemma \ref{ufddim3}, $ML(B)=ML^{*}(B)$. Now by 
Proposition \ref{dim3new}, $B=k^{[3]}$. 
\end{proof}

\begin{rem}
{\em (i) The hypothesis that $ML^{*}(B) \neq B$ is necessary in Lemma \ref{ufddim3}. We will show that for 
a three-dimensional affine UFD $B$, containing an algebraically closed field of characteristic zero, it may happen that 
$ML^{*}(B)=B$ but $\td_{k}ML(B)$ is zero (Example \ref{threefold}), one (Example \ref{rkt}), or two (Example \ref{237}) 
i.e. $ML^{*}(B) \neq ML(B)$. 

(ii) Example \ref{danex3} will show that both the hypotheses ``$k$ is an algebraically closed field'' and ``$B$ is a UFD'' are 
needed for the implication (III) $\implies$ (I) in Theorem \ref{a3}. 
}
\end{rem}

\medskip

Lemma \ref{lem2} shows that there does not exist any three-dimensional affine $k$-domain $B$ such that
$ML(B)\subsetneqq ML^*(B)$ but $\td_{k}ML^{*}(B)=2$.
 However we pose the following question.
 
\medskip

\noindent
{\bf Question 4.8.} {\it Does there exist a three-dimensional affine $k$-domain $B$ over a field $k$ of characteristic zero
such that $ML(B)=k$ but $\td_{k}ML^{*}(B)=1$?}

\medskip

Note that Theorem \ref{a3} shows that Question 4.8 has negative answer when $k$ is an algebraically closed field and
 $B$ is a UFD. If the answer to Question 4.8 is negative in general then the implication (III) $\implies$ (II) will hold in 
 Theorem \ref{a3} even without the additional hypotheses that ``$k$ is an algebraically closed field'' and  ``$B$ is a UFD''. 

\section{Some examples}
In this section we shall present some examples to illustrate the hypotheses of the results stated earlier.
The following example shows that both the hypotheses  ``$k$ is algebraically closed'' and ``$B$ is a UFD'' are needed in Theorem 
\ref{a3}. 

\begin{ex}\label{danex3}
{\em
Let $k$ be a field of characteristic zero, $$R:=\frac{k[X,Y,Z]}{(XY-Z^{2}-1)} \text{~~and~~} B:=R[T].$$ 
Then the following hold:
\begin{enumerate}
 \item [\rm (i)] If $k$ is an algebraically closed field, then $B$ is not a UFD.
 \item [\rm (ii)] If $k=\bR$, then $B$ is a UFD.
  \item [\rm (iii)] $ML^{*}(B)=k$.
 \item [\rm (iv)] $B \neq k^{[3]}$.
\end{enumerate}
Thus the conditions (II) and (III) of Theorem \ref{a3} hold but not (I).

\begin{proof}
 Let $x$, $y$ and $z$ denote the images in $B$ of $X$, $Y$ and $Z$ respectively. 

(i)  One can see that $x$ is an irreducible element of $B$. Now if $k$ is an algebraically closed field, then clearly $x$ is 
not a prime element in $B$. 

(ii) Suppose $k=\bR$. Then $x$ is a prime element in $B$ and since $B[1/x](= k[x,1/x]^{[2]})$ is a UFD, we have $B$ is a UFD 
by Lemma \ref{nag}. 

(iii) Consider the two locally nilpotent $k$-derivations on $B$, say $D_1$ and $D_2$ given by 
$$D_1(x)=0, ~~D_1(y)=2z, ~~D_1(z)=x, ~~D_1(T)=1 ~~\text{and}$$ $$D_2(x)=2z, ~~D_2(y)=0, ~~D_2(z)=y, ~~D_2(T)=1.$$ Let 
$A_{i}=Ker$ $D_{i}$ for $i=1,2$. Then by Theorem \ref{st}, $$A_1=k[x,y-2zT+xT^{2}, z-xT] ~~\text{and}$$ 
$$A_2=k[y,x-2zT+yT^{2}, z-yT].$$ We now show that $A_1 \cap A_2=k$. Consider $A_1$ as a subring of 
$A_1[\frac{1}{x}]=k[x,\frac{1}{x},\frac{z}{x}-T]$ and $A_2$ as a subring of $A_2[\frac{1}{y}]=k[y,\frac{1}{y},\frac{z}{y}-T]$. 
Let $\alpha \in A_1 \cap A_2$ and $n:=T$-degree of $\alpha$. Then there exist elements $a_{i}(x) \in k[x,\frac{1}{x}]$ 
and $b_{j}(y) \in k[y,\frac{1}{y}]$ for $i,j \in \{0,1, \dots,n\}$ such that 
$$\alpha= \sum_{i=0}^{n}a_{i}(x){(\frac{z}{x}-T)}^{i}= \sum_{j=0}^{n}b_{j}(x){(\frac{z}{y}-T)}^{j}.$$ 
Comparing the coefficients of $T^{n}$ from the two expressions, we have ${(-1)}^{n}a_{n}(x)={(-1)}^{n}b_{n}(y) \in 
k[x,\frac{1}{x}] \cap k[y, \frac{1}{y}]=k$ (since $x$ and $y$ are algebraically independent over $k$). Again, comparing the 
coefficients of $z^{n}$ from the two expressions, we have $\frac{a_{n}(x)}{x^n}=\frac{b_{n}(y)}{y^n}$. Hence $n=0$ and 
consequently $\alpha \in k$. Thus $ML^{*}(B)=k$.

(iv) Let $\bar{k}$ denote the algebraic closure of $k$. Then $B {\bigotimes}_{k} \bar{k}$ is not a UFD by (i). Hence $B \neq k^{[3]}$.
\end{proof}
}
\end{ex}

We now present examples of  affine domains $B$ for which $ML(B) \subsetneqq ML^{*}(B)=B$.
 We first present an example for $\dim B=2$.  
By Corollary \ref{ufddim2}, such an example 
is not possible for two-dimensional factorial affine domains.

\begin{ex}\label{danex}
{\em Let $k$ be an algebraically closed field of characteristic zero, $n \geq 1$ be an integer and 
$p(Y) \in k[Y]$ be such that $\deg p(Y) \geq 2$. Let 
$$B:=\frac{k[X,Y,Z]}{(X^{n}Z-p(Y))}.$$ 
Let $x$ denote the image of $X$ in $B$. $B$ is not a UFD (since $x$ is irreducible but not a prime in $B$). We have
\begin{enumerate}
 \item [\rm (i)] If $n=1$, then $ML(B)=k$ by Theorem \ref{dan}(i) but $ML^{*}(B)=B$ by Theorem \ref{a2} (since $B \neq k^{[2]}$). 
 \item [\rm (ii)] If $n \geq 2$, then $ML(B)=k[x]$ and $B$ is a semi-rigid ring by Theorem \ref{dan}(ii) but $ML^{*}(B)=B$ 
 by Theorem \ref{st} (since $B \neq k^{[2]}$). 
\end{enumerate}
}
\end{ex}

\medskip
We now present examples of three-dimensional affine UFD $B$ for which $ML^{*}(B)=B$ but $ML(B) \subsetneqq ML^{*}(B)$. In the 
three examples $\td_{k}ML(B)$ is two, one and zero respectively.

\begin{ex}\label{237}
{\em Let $$R:=\frac{\bC[X,Y,Z]}{(X^{2}+Y^{3}+Z^{7})} \text{~~and~~} B:= \frac{R[U,V]}{(X^{2}U-Y^{3}V-1)}.$$ 
It has been proved by D.R. Finston and S. Maubach that $B$ is a semi-rigid 
UFD of dimension $3$ and $ML(B)=R$ \cite[Theorem 2]{FM}; in particular, $\td_{k}ML(B)=2$. But $ML^{*}(B)=B$ by 
Theorem \ref{st} (since $B \neq R^{[1]}$).
}
\end{ex}

\begin{ex}\label{rkt}
{\em Let $$B:=\frac{\bC[X,Y,Z,T]}{(X+X^{2}Y+Z^{2}+T^{3})},$$ be the well-known Russell-Koras threefold. 
Let $x$ denote the image of $X$ in $B$. Since $B_x$ is a UFD, by Lemma \ref{nag}, 
$B$ is a UFD. It has been proved by L.G. Makar-Limanov that $ML(B)=\bC[x]={\bC}^{[1]}$ \cite[Lemma 8]{ML}; in particular 
$\td_{k}ML(B)=1$. Since $B^{*}={\bC}^{*}$, we have $ML(B) \neq ML^{*}(B)$ by Proposition \ref{dim3new} and hence 
$ML^{*}(B)=B$ by Lemma \ref{ufddim3}. Note that the results in \cite{DZP} also imply that $ML^{*}(B)=B$.
} 
\end{ex}

\begin{ex}\label{threefold}
{\em Let $k$ be an algebraically closed field of characteristic zero and $$B:=\frac{k[X,Y,Z,T]}{(XY-ZT-1)}.$$ 
Let the images of $X$, $Y$, $Z$ and $T$ 
in $B$ be denoted by $x$, $y$, $z$ and $t$ respectively. Since $B_x$ is a UFD, by Lemma \ref{nag}, 
$B$ is a UFD. Moreover $B$ is regular. Consider four non-zero locally nilpotent derivations 
$D_{1}, ~ D_{2}, ~ D_{3} ~ \text{and} ~ D_{4}$ on $B$ given by
\begin{enumerate}
 \item [\rm (i)]$D_{1}x=0, \quad D_{1}y=z, \quad D_{1}z=0, \quad D_{1}t=x$. 
 \item [\rm (ii)]$D_{2}x=0, \quad D_{2}y=t, \quad D_{2}z=x,\quad D_{2}t=0$.
 \item [\rm (iii)]$D_{3}x=z, \quad D_{3}y=0, \quad D_{3}z=0, \quad D_{3}t=y$.
 \item [\rm (iv)]$D_{4}x=t, \quad D_{4}y=0, \quad D_{4}z=y, \quad D_{4}t=0$. 
\end{enumerate}
Let $Ker~D_{i}=A_{i}$ for each $i=1,2,3,4$. Now $k[x,z] \subseteq A_{1} \subseteq B$.
Since both $k[x,z]$ and $A_{1}$ are algebraically closed in $B$ and have the same transcendence degree over $k$, we have $A_{1}=k[x,z]$. 
Similarly $A_{2}=k[x,t]$, $A_{3}=k[y,z]$, $A_{4}=k[y,t]$ and $\bigcap_{i}A_{i}=k$. Thus $ML(B)=k$, i.e. $\td_{k}ML(B)=0$. 
But $B \neq k^{[3]}$ (since the Whitehead group $K_{1}(B) \neq k^{*}$) and it follows from Theorem \ref{a3} 
that $ML^{*}(B)=B$.
}
\end{ex}

We now present an example which shows that Theorem \ref{a3} does not extend to a four-dimensional affine regular UFD, i.e., a 
four-dimensional affine UFD $\widetilde{B}$ need not be $k^{[4]}$, even when $ML(\widetilde{B})=ML^{*}(\widetilde{B})=k$.  
We will follow the notation of Example \ref{threefold}. 
\begin{ex}\label{dim4ex1}
{\em Let $B$ be as in Example \ref{threefold} and $\widetilde{B}:=B[u]=B^{[1]}$. 
$\widetilde{B}$ is a regular UFD of dimension four. For each $i=1,2,3,4$, we extend the 
locally nilpotent derivation $D_{i}$ of $B$ to a locally nilpotent derivation $\widetilde{D_{i}}$ of $\widetilde{B}$, 
by defining $\widetilde{D_{i}}u=1$. Let 
$$\widetilde{D_{5}}=\frac{\partial}{\partial u}~\text{ and }Ker~\widetilde{D_{i}}=\widetilde{A_{i}}.$$ 
By Theorem \ref{st}, we have  
$$\widetilde{A_{1}}=k[x,z,y-zu,t-xu],$$ 
$$\widetilde{A_{2}}=k[x,t,z-xu,y-tu],$$ 
$$\widetilde{A_{3}}=k[y,z,x-zu,t-yu],$$ 
$$\widetilde{A_{4}}=k[y,t,x-tu,z-yu]~\text{ and }$$
$$\widetilde{A_{5}}=k[x,y,z,t].$$
Clearly $k[x,z+t-xu]\subseteq \widetilde{A_{1}} \cap \widetilde{A_{2}}$.
Since $k[x,z+t-xu]$ and $\widetilde{A_{1}} \cap \widetilde{A_{2}}$ are algebraically closed in $B[u]$ and they have the 
same transcendence degree over $k$, we have  
$$
\widetilde{A_{1}} \cap \widetilde{A_{2}}=k[x,z+t-xu].
$$
Similarly, 
$$
 \widetilde{A_{3}} \cap \widetilde{A_{4}}=k[y,z+t-yu].
$$
Again, 
$$
\widetilde{A_{5}} \cap k[x,z+t-xu]=k[x] \text{~and~}  \widetilde{A_{5}} \cap k[y,z+t-yu]=k[y].
$$
Hence $\bigcap_{i}\widetilde{A_{i}}=k$. Thus 
$ML^{*}(\widetilde{B})=ML(\widetilde{B})=k$. 
But $\widetilde{B}\neq k^{[4]}$ (for instance, $K_{1}(\widetilde{B})=K_{1}(B) \neq k^{*}$).
}
\end{ex}

The following example shows that Lemma \ref{dim3} need not be true for a four-dimensional affine UFD $B$, even if 
$ML(B)=ML^{*}(B)$.
\begin{ex}\label{dim4ex}
{\em Let $k$ be an algebraically closed field of characteristic zero and \\ 
$R= k[X,Y,Z]/(X^2+Y^3+Z^7)= k[x,y,z]$, where 
$x$, $y$ and $z$ denote the images of $X$, $Y$ and $Z$ in $R$. Let $C=R[U,V]/(xU-yV-1)=R[u,v]$, where $u$ and $v$ denote the 
images of $U$ and $V$ in $C$ and $B=C[T]=C^{[1]}$. Then the following hold.
\begin{enumerate}
 \item [\rm (i)]$B$ is a UFD of dimension $4$.
 \item [\rm (ii)]$ML(B)= ML^{*}(B)=R$.
 \item [\rm (iii)]$B \neq  R^{[2]}$.
 \item [\rm (iv)]$B \neq S^{[2]}$ for any $k$-subalgebra $S$ of $B$.
\end{enumerate}

\begin{proof}
(i) By Lemma \ref{nag}, $R$ and $C$ are UFDs. Hence $B$ is a UFD. Clearly dim $B=4$.

(ii) By \cite[Lemma 2]{FM}, $R\subseteq ML(B)\subseteq ML^{*}(B)$. 
Consider the $R$-linear derivations $\delta_1$ and $\delta_2$ on $B$ as follows:
\[
\delta_1(u)= y,~~ \delta_1(v)=x~~\text{ and }~~\delta_1(T)=1
\]
and  
\[
\delta_2(u)= yT,~~ \delta_2(v)=xT~~\text{ and }~~\delta_1(T)=1.
\]
Clearly they are locally nilpotent derivations with slices $T$.
By Theorem \ref{st}, $A_1:= Ker$ $\delta_1= R[u-yT, v-xT]$
and $A_2:= Ker$ $\delta_2= R[2u-yT^2, 2v-xT^2]$.
Then ${A_1}_x= R_x[v-xT]$ and ${A_2}_x= R_x[2v-xT^2]$
and the two rings $A_1$ and $A_2$ are clearly different. Therefore $A_1 \cap A_2 \subsetneqq A_2$. 
As $A_1 \cap A_2$ is an inert subring of $B$ containing $R$, we have 
$A_1 \cap A_2=R$ by comparing the dimensions. 

(iii) Since $(x,y)B =B$, it follows that $B \neq R^{[2]}$.

(iv) Suppose there exists a $k$-subalgebra $S$ of $B$ such that $B=S^{[2]}$. Then $R=ML(B) \subseteq S$. Since $\td_{k}R
=\td_{k}S$ and both $R$ and $S$ are algebraically closed in $B$, it follows that $R=S$, contradicting (iii). 
Hence the result.
\end{proof}
}
\end{ex}

\begin{rem}
{\em An important problem in Affine Algebraic Geometry asks whether, for the Russell-Koras threefold $B$ defined in 
Example \ref{rkt}, $B^{[1]}={\bC}^{[4]}$. An affirmative answer will give a negative solution to the Zariski Cancellation 
Problem for the three-space in characteristic zero.
It was shown by A. Dubouloz in \cite{DZ} that $ML(B^{[1]})={\bC}$  and A. Dubouloz and J. Fasel have shown that 
$X=Spec(B)$ is ${\A}^{1}$-contractible \cite[Theorem 1.1]{DZF}. This leads to the question:

\medskip
\noindent
{\it {\bf Question 5.9.} Let $B$ be as in Example \ref{rkt}. Is $ML^{*}(B^{[1]})={\bC}~?$}

A negative answer to this problem will confirm that $B$ is not a candidate for the Zariski Cancellation Problem for the 
affine three space.
}
\end{rem}

\noindent
{\small
{\bf Acknowledgement:} The authors thank Professor Amartya K. Dutta for going through the earlier drafts and suggesting 
improvements. The authors also thank the referee for the present version of Lemma 3.1 and other useful suggestions.
The second author also acknowledges Department of Science and Technology for their Swarna Jayanti Award.}
}

{\small{

}}

\begin{thebibliography}{9999}
\bibitem{CML2} A.J. Crachiola, L.G. Makar-Limanov,
{\it On the rigidity of small domains},
J. Algebra {\bf 284} (2005) 1--12.
\bibitem{CML1} A.J. Crachiola, L.G. Makar-Limanov,
{\it An algebraic proof of a cancellation theorem for surfaces},
J. Algebra {\bf 320} (2008) 3113--3119.
\bibitem{DF} D. Daigle, G. Freudenburg,
{\it Locally nilpotent derivatons over a UFD and an application to rank two locally nilpotent derivations 
of $k[X_1,\dots,X_n]$}, 
J. Algebra {\bf 204} (1998) 353--371.
\bibitem{DZ} A. Dubouloz, 
{\it The cylinder over the Koras-Russell cubic threefold has a trivial Makar-Limanov invariant},
Trans. Groups {\bf 14} (2009) 531--539. 
\bibitem{DZF} A. Dubouloz, J. Fasel,
{\it Families of ${\A}^{1}$-contractible affine threefolds},
Algebr. Geom. 5 {\bf 1} (2018) 1--14. 
\bibitem{DZP} A. Dubouloz, L. Moser-Jauslin, P.M. Poloni,
{\it Inequivalent embeddings of the Koras-Russell cubic 3-fold},
Michigan Math. J. {\bf 59} (2010) 679--694.
\bibitem{FM} D.R. Finston, S. Maubach,
{\it The automorphism group of certain factorial threefolds and a cancellation problem},
Israel Journal of Mathematics {\bf 163} (2008) 369--381. 
\bibitem{F} G. Freudenburg, 
{\it Algebraic Theory of Locally Nilpotent Derivations}, 
Springer-Verlag Berlin Heidelberg (2006).
\bibitem{Fu} T. Fujita,
{\it On Zariski Problem},
Proc. Japan Acad. Ser. A Math. Sci. {\bf 55} (1979) no. 3, 106--110.
\bibitem{K} Sh. Kaliman,
{\it Polynomials with general ${\bC}^{[2]}$-fibers are variables},
Pacific J. Math. {\bf 203} (2002) no. 1, 161--190.
\bibitem{ML} L.G. Makar-Limanov,
{\it On the hypersurface $x+x^{2}y+z^{2}+t^{3}=0$ in ${\bC}^{4}$ or ${\bC}^{3}$-like threefold which is not ${\bC}^{3}$},
Israel Journal of Mathematics {\bf 96} (1996) 419--429. 
\bibitem{M} H. Matsumura,
{\it Commutative Ring Theory},
Cambridge University Press (1986).
\bibitem{MN1} M. Miyanishi,
{\it An algebraic characterization of the affine plane},
J. Math. Kyoto Univ. {\bf 15} (1975) 169--184.
\bibitem{MN2} M. Miyanishi,
{\it An algebro-topological characterization of the affine space of dimension three},
Amer. J. Math. {\bf 106} (1984) 1469--1486.
\bibitem{MS} M. Miyanishi and T. Sugie, 
{\it Affine surfaces containing cylinderlike open sets},
J. Math. Kyoto Univ. {\bf 20} (1980) 11--42.
\bibitem{R} C.P. Ramanujam,
{\it A topological characterization of the affine plane as an algebraic variety},
Ann. of Math. {\bf 94} (1971) 69--88.
\end{thebibliography}
\end{document}